\documentclass[11pt,reqno]{amsart}
\usepackage[T1]{fontenc}
\usepackage{lmodern}
\usepackage{amsmath,amssymb,mathtools,mathrsfs}
\usepackage{microtype}
\usepackage{tikz-cd}
\usepackage{booktabs}
\usepackage{xurl}
\usepackage[colorlinks=true,linkcolor=blue,citecolor=blue,urlcolor=blue]{hyperref}
\hypersetup{
  pdftitle={Rigid and Obstructed Tangent Bundles on Calabi-Yau Threefolds},
  pdfauthor={Xueyuan Wan},
  pdfsubject={Stable tangent-bundle rigidity, a counterexample to Peternell, and an explicit Kuranishi space},
  pdfkeywords={Calabi-Yau threefold, tangent bundle, deformation, Kuranishi space, finite quotient}}
\providecommand{\doi}[1]{\href{https://doi.org/#1}{\nolinkurl{#1}}}
\newcommand{\CC}{\mathbb C}
\newcommand{\PP}{\mathbb P}
\newcommand{\ZZ}{\mathbb Z}
\newcommand{\FF}{\mathbb F}
\newcommand{\OO}{\mathcal O}
\DeclareMathOperator{\End}{End}

\DeclareMathOperator{\Ext}{Ext}
\DeclareMathOperator{\Def}{Def}
\DeclareMathOperator{\Aut}{Aut}

\DeclareMathOperator{\Sym}{Sym}
\newtheorem{theorem}{Theorem}[section]
\newtheorem{proposition}[theorem]{Proposition}
\newtheorem{lemma}[theorem]{Lemma}
\newtheorem{question}[theorem]{Question}
\newtheorem{corollary}[theorem]{Corollary}
\theoremstyle{definition}
\newtheorem{convention}[theorem]{Convention}

\theoremstyle{remark}
\newtheorem{remark}[theorem]{Remark}

\numberwithin{equation}{section}
\allowdisplaybreaks[1]
\title[Rigid and obstructed tangent bundles]{Rigid and Obstructed Tangent Bundles\texorpdfstring{\\}{ }on Calabi--Yau Threefolds}
\date{}
\makeatletter
\@namedef{subjclassname@2020}{\textup{2020} Mathematics Subject Classification}
\makeatother
\subjclass[2020]{14J32, 14J60, 32G08 }
\keywords{Calabi--Yau threefold, tangent bundle, bundle deformation,
finite \'etale quotient, rigidity, Kuranishi space}

\begin{document}
\author{Xueyuan Wan}
\address{Mathematical Science Research Center, Chongqing University of Technology,
Chongqing 400054, China}
\email{xwan@cqut.edu.cn}
\begin{abstract}
We address two questions of Huybrechts concerning rigidity and singular deformation spaces of tangent bundles on Calabi--Yau threefolds, without assuming simple connectedness. Our first example is a free $(\mathbb Z/2)^3$ quotient of a smooth intersection of four quadrics in $\mathbb P^7$. Its tangent bundle is stable and infinitesimally rigid, and it also provides a three-dimensional counterexample to the proposed classification in Peternell's Question~1.6. For a classical Igusa quotient of a product of three elliptic curves, we determine the analytic semiuniversal deformation germ of its polystable tangent bundle, keeping the underlying threefold fixed. This germ is isomorphic to the product of two copies of the union of the three coordinate axes in $\mathbb C^3$. It is therefore reduced and singular, with nine two-dimensional irreducible components.
\end{abstract}
\maketitle

\section{Introduction}

Deforming the tangent bundle of a complex manifold while keeping the
manifold fixed is different from deforming its complex structure.  The two
problems have tangent spaces $H^1(X,\End T_X)$ and $H^1(X,T_X)$,
respectively.  In particular, the unobstructedness of deformations of a
Calabi--Yau manifold \cite{Tian1987,Todorov1989} does not imply
unobstructedness for its tangent bundle.

The study of $H^1(X,\End T_X)$ was developed in early calculations of
endomorphism-valued cohomology motivated by superstring compactifications
\cite{DGKM1989,EastwoodHubsch1990}.  In
\cite[\S2]{Huybrechts1995}, Huybrechts singled out three questions:
\begin{enumerate}
\item[(i)] Can a Calabi--Yau threefold have an infinitesimally rigid
  tangent bundle, that is, $H^1(X,\End T_X)=0$?
\item[(ii)] Must $h^1(X,\End T_X)$ remain constant when the complex
  structure of $X$ varies?
\item[(iii)] Can the deformation space of $T_X$, with $X$ fixed, be singular?
\end{enumerate}
Question~(ii) has a negative answer: Aspinwall, Melnikov,
and Plesser \cite[\S5.3.1 and Appendix~A.2]{AMP2012} computed
$h^1(\End T)=200$ at the Fermat resolved octic and $188$ at a general
member of the same smooth family. We concentrate on questions~(i)
and~(iii).

\begin{convention}\label{conv:cy}
A Calabi--Yau threefold is a smooth connected projective variety $X$ over
$\CC$ with $K_X\simeq\OO_X$ and $H^1(X,\OO_X)=0$.  Simple connectivity
is not required.  For a holomorphic vector bundle $E$ on $X$, the notation
$\Def_X(E)$ denotes a germ of a semiuniversal analytic deformation base
with $X$ fixed.  We call $E$ \emph{infinitesimally rigid}, or simply
\emph{rigid}, when $H^1(X,\End E)=0$.
\end{convention}

We follow the convention in
\cite[Introduction, Notation]{Huybrechts1995},
with the additional assumption that $X$ is projective.
Simple connectivity is not required.  The absence of a simple-connectivity
assumption is essential to both constructions below.

\subsection{Rigidity and Peternell's question}
Our first result identifies a stable rigid tangent bundle on a classical
free quotient of a complete intersection.

\begin{theorem}\label{thm:rigid}
There is a smooth intersection $Y\subset\PP^7$ of four diagonal quadrics
with a free action of $G\simeq(\ZZ/2)^3$ such that $X=Y/G$ is a
Calabi--Yau threefold and $T_X$ is slope-stable with respect to every ample
polarization.  Moreover,
\[
H^1(X,\End T_X)=0,\quad
\pi_1(X)\simeq(\ZZ/2)^3,\quad
(h^{1,1}(X),h^{2,1}(X))=(1,9).
\]
\end{theorem}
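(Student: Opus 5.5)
We take $Y=\{\sum_i a_{ji}x_i^2=0,\ j=1,\dots,4\}\subset\PP^7$ with a general $4\times 8$ coefficient matrix $A=(a_{ji})$. Smoothness holds exactly when every $4\times4$ minor of $A$ is nonzero, which is an open condition. The group $G\simeq(\ZZ/2)^3$ is generated by sign changes $x_i\mapsto \epsilon_i x_i$ together with a permutation-type element; this is the classical Beauville-type choice. Concretely we identify the index set $\{0,\dots,7\}$ with $\FF_2^3$ and let $G$ act by translations $x_v\mapsto x_{v+g}$ twisted by characters, with $A$ chosen $G$-invariant. Freeness is checked by computing, for each $g\ne 1$, the fixed locus of $g$ on $\PP^7$. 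This locus is a union of linear eigenspaces. We then show that the invariant quadrics restricted to each eigenspace have no common zero for generic invariant $A$; this reduces to a dimension count. Once the action is free, $K_X$ is trivial because $\omega_Y\simeq\OO_Y$ and the generator is $G$-invariant, which we check via the residue form. Also $\chi(\OO_X)=\chi(\OO_Y)/8=0$ and $H^1(X,\OO_X)=H^1(Y,\OO_Y)^G=0$. Since $Y$ is a simply connected complete intersection (Lefschetz), we get $\pi_1(X)\simeq G$.

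For the Hodge numbers, Lefschetz gives $h^{1,1}(Y)=1$, hence $h^{1,1}(X)=1$. A complete intersection of four quadrics in $\PP^7$ has $e(Y)=-128$, so $e(X)=-16$ and $h^{2,1}(X)=1+8=9$. Equivalently, $h^{2,1}(X)$ is the dimension of $G$-invariant quadric deformations modulo $G$-normalizer automorphisms.

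Since $\mathrm{Pic}(X)\otimes\mathbb{Q}$ has rank one, the phrase "every ample polarization" reduces to a single one, and stability of $T_X$ follows from stability of $T_Y$ on $Y$ with respect to $\OO(1)$. The latter holds because $T_Y$ is stable on any Calabi–Yau complete intersection of Picard rank one, by Peternell's theorem or Hoppe's criterion. Concretely, $H^0(\wedge^pT_Y(-1)\otimes\ldots)=0$ checks reduce by Koszul to vanishing on $\PP^7$. Stability descends along the étale cover: $\pi^*T_X=T_Y$, and a destabilizing subsheaf of $T_X$ pulls back to a destabilizing subsheaf of $T_Y$.

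The main step is $H^1(X,\End T_X)=H^1(Y,\End T_Y)^G=0$. We compute $H^1(Y,T_Y\otimes\Omega_Y)$ as a $G$-representation. First, take the normal sequence $0\to T_Y\to T_{\PP^7}|_Y\to \OO_Y(2)^4\to0$ tensored with $\Omega_Y$, together with the Euler sequence. Second, use the dual conormal sequence for $\Omega_Y$. These reduce the computation to cohomology of $\Omega_Y(k)$ and $\OO_Y(k)$, computed via the Koszul resolution on $\PP^7$ and Bott vanishing. The expected outcome is that $H^1(\End T_Y)$ is a cokernel or kernel of maps between explicit spaces, such as $H^0(\OO_Y(2))^{4}\otimes V$, quotiented by Jacobian-type images. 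With diagonal quadrics these spaces decompose by monomial weights under the diagonal torus. The $G$-action then permutes and twists the weight spaces, and we show that no $G$-invariant vector survives. This involves a character computation, which we can organize with the Lefschetz/holomorphic trace formula: $\dim H^\bullet(\End T_Y)^G=\frac18\sum_g \operatorname{tr}(g)$. Because every $g\ne1$ acts freely, the trace contributions $\chi(Y,\End T_Y)$ vanish equivariantly. Hence $\chi(X,\End T_X)=\chi(Y,\End T_Y)/8$, which is $0$ for Calabi–Yau threefolds by Serre duality. Rigidity is therefore equivalent to $h^1=h^0+h^3-h^2+h^1$. Combined with $h^0=h^3=1$ from stability, it is equivalent to $h^1(\End T_X)=0$ together with $h^2=h^1$, which is automatic. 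So the Euler characteristic gives no information, and the obstacle is genuinely to show that the $G$-invariant part of the explicit $H^1$ vanishes. I expect this eigenspace bookkeeping to be the hardest part, and I would first try to do it by computer-free weight counting adapted to the torus-compatible structure of diagonal quadrics.
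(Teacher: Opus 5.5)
\textbf{There are two genuine gaps: one in the construction and one in the main step.}

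\textbf{The construction.} The group action is never pinned down. You say both ``general $A$'' and ``$A$ chosen $G$-invariant'', and you mix sign changes with translations. The concrete version you settle on cannot give a smooth $Y$.

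If $g$ acts by $x_v\mapsto\pm x_{v+g}$, it permutes the squares $x_v^2$. A $G$-stable four-dimensional space $U$ of diagonal quadrics is then a translation-stable subspace of $\CC^{\FF_2^3}$. Since this is the regular representation, $U$ is spanned by four distinct characters $\chi_1,\dots,\chi_4$ of $\FF_2^3$, which you may use as coefficient rows; a change of basis of $U$ only rescales minors. On the four columns indexed by $\ker(\chi_1\chi_2)$, the first two rows coincide. So that $4\times4$ minor vanishes, and by your own criterion $Y$ is singular.

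What works is the pure sign-change action $x_v\mapsto(-1)^{\langle g,v\rangle}x_v$:
\begin{itemize}
\item It fixes every diagonal quadric, so any $A$ with all $4\times4$ minors nonzero is allowed, for example a Vandermonde matrix.
\item The fixed locus of $g\ne1$ is two coordinate $\PP^3$'s. These miss $Y$ because every point of $Y$ has at least five nonzero coordinates.
\item $\det D_g=1$, so the residue three-form is invariant.
\end{itemize}

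\textbf{The rigidity step.} This is only announced. As you note, Euler characteristics and holomorphic Lefschetz traces see only alternating sums, so you need $H^1(Y,\End T_Y)$ itself as a $G$-module. Your outline stops before computing it.

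The missing input concerns $F=T_{\PP^7}|_Y\otimes\Omega^1_Y$: one needs $H^0(Y,F)=\CC$, spanned by the tangent inclusion, and $H^1(Y,F)=0$. Given this, the normal sequence tensored with $\Omega^1_Y$ yields an equivariant isomorphism
\[
H^1(Y,\End T_Y)\cong H^0(Y,\Omega^1_Y\otimes N_Y)\cong U^*\otimes H^0(Y,\Omega^1_Y(2)).
\]
The Euler and conormal sequences then identify $H^0(Y,\Omega^1_Y(2))$ with $\Lambda^2W$, where $W=H^0(Y,\OO_Y(1))$.

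The vanishing $H^1(Y,F)=0$ is the real work. By Serre duality it amounts to surjectivity of the multiplication map $W\otimes H^1(Y,T_Y(-1))\to H^1(Y,T_Y)$. That surjectivity follows from the surjective connecting maps $H^0(Y,N_Y(t))\to H^1(Y,T_Y(t))$ for $t=-1,0$, together with surjectivity of $W\otimes R_1\to R_2$.

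For the sign action, $W$ is the regular representation and $U$ is trivial. So $x_v\wedge x_w$ with $v\ne w$ carries the nontrivial character $\chi_{v+w}$, and there are no invariants. Note that ``weights under the diagonal torus'' are not available here, since the torus does not preserve $Y$; only the finite sign group acts.

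\textbf{What is fine.} Your Hodge-number computation is correct. Your stability argument is a valid alternative to the paper's argument via Yau's theorem and $SU(3)$ holonomy: since $H^2$ has rank one, it suffices to treat one polarization, and Hoppe's criterion on $Y$, with $\operatorname{Pic}(Y)=\ZZ H$, needs only $H^0(T_Y)=H^0(\Omega^1_Y)=0$.
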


The relevance of this example extends beyond Calabi--Yau geometry.
Peternell \cite[Notation~1.1 and Remark~1.2]{Peternell2020} calls
a first-order deformation of $T_X$ \emph{genuine} if it is not
obtained by tensoring $T_X$ with a first-order deformation of
$\OO_X$.  Equivalently, genuine first-order deformations exist
precisely when
\[
H^1(X,\End_0 T_X)\ne 0,
\]
where $\End_0 T_X$ denotes the sheaf of trace-free endomorphisms.

For a compact K\"ahler manifold $X$, Peternell formulates the
proposed characterization in \cite[Question~1.6]{Peternell2020}
as follows:
\begin{question}[Peternell]\label{question}
For a compact K\"ahler manifold $X$, Then
$T_X$ has no genuine first order deformation if and only if $X$ is one of the
following.
\begin{itemize}
\item $X \simeq \mathbb{P}_n$
\item $X$ is a ball quotient of dimension at least three
\item $X$ is a two-dimensional ball quotient with $H^1(X,\End_0 T_X)=0$
\item $X$ is a product (with at least two factors)
$$
X=\prod_j X_j
$$
with $X_j$ ball quotients such that $q(X_j)=0$ and
$H^1(X_j,\End_0 T_{X_j})=0$ for all $j$.
\end{itemize}	
\end{question}

Theorem~\ref{thm:rigid} provides a counterexample to the
``only if'' direction of this proposed characterization.

\begin{corollary}
\label{cor:peternell}
There exists a smooth projective Calabi--Yau threefold $X$ whose
tangent bundle is stable and has no genuine first-order
deformations, but which belongs to none of the four classes
listed in Question \ref{question}.
In particular, the proposed characterization fails in complex
dimension three.
\end{corollary}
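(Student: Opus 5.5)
The plan is to take $X=Y/G$ from Theorem~\ref{thm:rigid} and check, one at a time, the two properties the corollary needs and then the four exclusions. Stability of $T_X$ is part of Theorem~\ref{thm:rigid}. For genuine deformations, note that $\End T_X=\OO_X\oplus\End_0 T_X$ via the trace splitting, which exists because we are in characteristic zero. Hence
\[
H^1(X,\End_0 T_X)\subset H^1(X,\End T_X)=0 .
\]
So $T_X$ has no genuine first-order deformations, and in fact no first-order deformations at all. (Independently, $H^1(X,\OO_X)=0$ by Convention~\ref{conv:cy}.) $X$ is smooth projective, hence compact Kähler, so it lies in the scope of Question~\ref{question}.

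It remains to show that $X$ is in none of the four classes. The uniform tool is the canonical bundle: $K_X\simeq\OO_X$, so $c_1(X)=0$ in $H^2(X,\mathbb R)$.

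\begin{itemize}
\item \textbf{Projective space.} $\PP^3$ has $K=\OO(-4)$, which is not trivial. Alternatively, $\PP^3$ is simply connected, while $\pi_1(X)\simeq(\ZZ/2)^3$.
\item \textbf{Ball quotients.} A compact quotient of the ball $B^n$ by a torsion-free cocompact lattice carries a Kähler–Einstein metric of negative Ricci curvature. Its canonical bundle is therefore ample, so $K_X^3>0$, whereas $K_X^3=0$ for our $X$. This excludes the second class, and the third is excluded trivially because $\dim X=3\neq 2$.
\item \textbf{Products.} If $X\simeq\prod_j X_j$ with at least two factors, each $X_j$ a ball quotient, then $K_X=\boxtimes_j K_{X_j}$ is ample, again contradicting $K_X\simeq\OO_X$.
\end{itemize}

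For the product case I could also give a Hodge-theoretic argument that avoids ampleness. Any nontrivial product decomposition of a threefold contains a factor of dimension $\le 2$. The Künneth formula would then give $h^{1,1}(X)\ge 2$, because the pullbacks of Kähler classes from two factors are linearly independent. This contradicts $h^{1,1}(X)=1$ from Theorem~\ref{thm:rigid}. I will include this as a second check.

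There is no real obstacle here: all the substantive work sits in Theorem~\ref{thm:rigid}. The only point needing care is to state precisely that compact ball quotients in the sense of Question~\ref{question} (smooth, i.e.\ torsion-free lattice) have ample canonical bundle. I will cite this standard fact (Bergman metric / negative holomorphic sectional curvature) rather than reprove it. Finally, since "in particular the characterization fails" refers to the "only if" direction, I will conclude by noting that $X$ satisfies the left-hand side of the equivalence but not the right-hand side.
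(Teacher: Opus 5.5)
Your proposal is correct. The first half matches the paper's argument exactly: stability comes from Theorem~\ref{thm:rigid}, and $H^1(X,\End_0T_X)=0$ follows from the trace splitting.

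Where you differ is in excluding the four classes. The paper uses only the finiteness of $\pi_1(X)\simeq(\ZZ/2)^3$. This rules out $\PP^3$, and it also rules out every compact ball quotient and every product of positive-dimensional ones, since these have infinite fundamental group: otherwise the noncompact universal cover would be a finite cover of a compact space. You instead play $K_X\simeq\OO_X$ against the ampleness of the canonical bundle of compact ball quotients and of their products. You back this up with the observation that $h^{1,1}(X)=1$ rules out any nontrivial product decomposition whatsoever.

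The two routes trade off as follows.
\begin{itemize}
\item The paper's route is purely topological and needs no curvature input. It relies, however, on the finite fundamental group of this particular example.
\item Your route needs the standard Bergman-metric (K\"ahler--Einstein) fact. In return it shows that any compact K\"ahler manifold with $c_1=0$ in $H^2(X,\mathbb R)$ lies outside all four classes, whatever its fundamental group.
\end{itemize}
Consequently, on your route the only input specific to this $X$ is the vanishing of $H^1(X,\End T_X)$.
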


\begin{proof}
Take the threefold $X$ of Theorem~\ref{thm:rigid}. It is compact
K\"ahler, and the trace splitting
\[
\End T_X\simeq
\OO_X\cdot\operatorname{id}_{T_X}\oplus\End_0 T_X
\]
shows that $H^1(X,\End_0 T_X)=0$. Thus $T_X$ has no genuine
first-order deformations.

Its fundamental group is $(\mathbb Z/2)^3$, so $X$ is not
$\mathbb P^3$. A compact ball quotient of positive dimension
has infinite fundamental group: otherwise its noncompact
universal cover would be a finite cover of a compact space.
The same holds for a product of positive-dimensional compact
ball quotients. Thus $X$ belongs to none of the four listed classes.
\end{proof}

The geometry of the free quotient is not new.  Free $(\ZZ/2)^3$
actions on intersections of four quadrics occur in
\cite[pp.~65, 67]{CHSW1985} and
\cite[p.~343]{StromingerWitten1985}; Hua studies and classifies the relevant
free actions in \cite{Hua2011}.  Our point is the vanishing of the entire
first-order tangent-bundle deformation space on such a quotient.  If
$W=H^0(Y,\OO_Y(1))$ and $U\subset\Sym^2W$ is the space of defining
quadrics, we obtain an equivariant isomorphism
$$
H^1(Y,\End T_Y)\simeq U^*\otimes\Lambda^2W.
$$
For the sign-change action, $U$ is trivial and $W$ is the regular
representation of $(\ZZ/2)^3$.  Consequently,
$$
H^1(Y,\End T_Y)\simeq
\bigoplus_{\substack{1\neq \chi\in\widehat G}}\chi^{\oplus16},
$$
which has no invariant vectors.  Finite \'etale descent then proves
rigidity.  More generally, Corollary~\ref{cor:involutions} shows that a free
projective action of a finite group $G$ on a smooth intersection of four
quadrics satisfies
$$
h^1(Y/G,\End T_{Y/G})=\frac{16(7-t(G))}{|G|},
$$
where $t(G)$ is the number of nonidentity elements of order two.  Thus the
vanishing has an explicit representation-theoretic explanation.

\subsection{An obstructed tangent bundle}
Our second result concerns a different classical quotient.  Here the
tangent bundle is polystable rather than stable, and the calculation
determines the full semiuniversal base, not just its quadratic tangent
cone.

\begin{theorem}
\label{thm:igusa-kuranishi}
There is a Calabi--Yau threefold $Z$, obtained as a free $(\ZZ/2)^2$
quotient of a product of three elliptic curves, whose tangent bundle is a
direct sum of three pairwise nonisomorphic unitary flat line bundles and
satisfies
\[
\bigl(h^0(Z,\End T_Z),h^1(Z,\End T_Z),
      h^2(Z,\End T_Z),h^3(Z,\End T_Z)\bigr)=(3,6,6,3).
\]
Its semiuniversal analytic deformation base on the fixed threefold $Z$ is
isomorphic to the germ
\begin{equation}\label{eq:igusa-kuranishi}
\mathcal K=\bigl(V(ab,bc,ca,de,ef,fd)\subset\CC^6_{a,b,c,d,e,f},0\bigr).
\end{equation}
This germ is reduced and has nine irreducible components, each a
coordinate plane of dimension two.  In particular, it is singular at the
origin, and some first-order deformation of $T_Z$ has nonzero Yoneda
square in $H^2(Z,\End T_Z)$.
\end{theorem}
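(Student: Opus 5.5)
We will work with the classical Igusa quotient. Let $Z=(E_1\times E_2\times E_3)/G$ with $G=\{1,g_1,g_2,g_3\}\simeq(\ZZ/2)^2$. Here $g_1(z_1,z_2,z_3)=(z_1+\tau_1,-z_2,-z_3)$, and $g_2,g_3$ are obtained by cyclically permuting the roles of the factors, with the translations $\tau_i$ of order two chosen so that the action is free. Each $g_i$ acts on $dz_1\wedge dz_2\wedge dz_3$ by $+1$. The invariant forms are $dz_j\wedge d\bar z_j$ and the products of these, so $H^1(\OO_Z)=0$ and $K_Z\simeq\OO_Z$. Hence $Z$ is Calabi--Yau in the sense of Convention~\ref{conv:cy}.

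The tangent bundle descends from $\bigoplus_j\OO\cdot\partial_{z_j}$. Therefore $T_Z=L_1\oplus L_2\oplus L_3$, where $L_j$ is the flat line bundle attached to the character $\chi_j$ of $G$ through which $G$ acts on $\partial_{z_j}$. These three characters are the three distinct nontrivial characters of $(\ZZ/2)^2$, so the $L_j$ are pairwise nonisomorphic, unitary and $2$-torsion. Then
\[
\End T_Z=\bigoplus_{j,k}L_j^{*}\otimes L_k,
\]
where the three diagonal summands are $\OO_Z$ and the six off-diagonal summands are flat bundles $L_m$ with $m\ne j,k$. Their cohomology is the $\chi$-isotypic part of
\[
H^{\bullet}(E_1\times E_2\times E_3,\OO)=\Lambda^{\bullet}\langle d\bar z_1,d\bar z_2,d\bar z_3\rangle.
\]
The element $g_i$ acts on $d\bar z_j$ by $\chi_j(g_i)$. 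A routine character count gives $h^q(\OO_Z)=(1,0,0,1)$ and $h^q(L_m)=(0,1,1,0)$, with $H^1(L_m)$ spanned by $d\bar z_m$ and $H^2(L_m)$ by $d\bar z_j\wedge d\bar z_k$. This yields $(3,6,6,3)$.

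For the Kuranishi space we use the fact that all transition data are flat. The deformation problem with $X$ fixed is governed by the dgla $\mathfrak g=(A^{0,\bullet}(\End T_Z),\bar\partial,[\,,])$. By Hodge theory for flat unitary bundles on the compact Kähler $Z$ (harmonic $=$ parallel forms on the torus cover, which are closed under wedge), the inclusion of harmonic forms
\[
\mathcal H=\bigl(\Lambda^{\bullet}\langle d\bar z\rangle\otimes\mathfrak{gl}_3\bigr)^G
\]
with zero differential is a dgla quasi-isomorphism. This is the standard formality argument, as in Goldman--Millson, via the $\partial\bar\partial$-type lemma on the flat unitary local system. Hence the germ is $\{x\in\mathcal H^1:[x,x]=0\}/\!/\text{(gauge)}$. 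Since $H^0=\mathfrak t$ is the diagonal torus acting by conjugation, semiuniversality gives the germ $\{[x,x]=0\}\subset H^1$ itself, taking the equivariant Kuranishi slice. The step I expect to be the main obstacle is justifying this formality and reduction carefully, including that the stabilizer torus does not force a further quotient.

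It remains to compute the bracket. Write $x=\sum_{j\ne k}x_{jk}\,d\bar z_{m(j,k)}\,E_{jk}$, with $m(j,k)$ the third index. Then
\[
[x,x]=2\sum x_{jk}x_{k l}\,d\bar z_{m(j,k)}\wedge d\bar z_{m(k,l)}\,E_{jl}.
\]
Terms with $l=j$ pair $d\bar z_m$ with itself and vanish. Terms with $l\ne j$ produce the components $E_{jl}$ with $j\ne l$, whose coefficient is $\pm x_{jk}x_{kl}$, where $k$ is the remaining index; the sign comes from reordering $d\bar z\wedge d\bar z$. Setting $(a,b,c)=(x_{12},x_{23},x_{31})$ and $(d,e,f)=(x_{21},x_{13},x_{32})$ (adjusting the labels if necessary), the six equations become $ab=bc=ca=de=ef=fd=0$. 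This gives \eqref{eq:igusa-kuranishi}. The ideal $(ab,bc,ca)$ is the radical monomial ideal of the three coordinate axes in $\CC^3$, so $\mathcal K$ is their product, reduced with $3\times3=9$ planar components. Finally, $x=a\,d\bar z_3E_{12}+b\,d\bar z_1E_{23}$ has $[x,x]\ne0$, so its Yoneda square is nonzero.
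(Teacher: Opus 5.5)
Your approach is the paper's: split $T_Z$ into three flat line bundles, model $A^{0,\bullet}(Z,\End T_Z)$ by the $G$-invariant constant matrix-valued forms on the torus, and compute $x\wedge x$. Your cohomology count, bracket computation, reducedness argument and obstructed class all agree with the paper. With your labels the bracket gives exactly $ab,bc,ca,de,ef,fd$, so no relabelling is needed. Two things need repair.

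First, the group you describe is not a free $(\ZZ/2)^2$. The cyclic permutations of $g_1$ are $g_2(z)=(-z_1,z_2+\tau_2,-z_3)$ and $g_3(z)=(-z_1,-z_2,z_3+\tau_3)$. Then $g_1g_2(z)=(-z_1+\tau_1,-z_2+\tau_2,z_3)\neq g_3(z)$, so $\{1,g_1,g_2,g_3\}$ is not closed. Moreover, $g_1g_2$ fixes every point of the curves $\{2z_1=\tau_1,\ 2z_2=\tau_2\}\times E_3$. No choice of the $\tau_i$ repairs this within your template. You need an extra translation on a negated coordinate, as in the paper: $g_2(z)=(-z_1,z_2+\tau_2,-z_3+\tau_3)$ and $g_3:=g_1g_2=(-z_1+\tau_1,-z_2+\tau_2,z_3+\tau_3)$. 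Everything downstream uses only the linear parts $\pm1$ and freeness, so the rest then goes through unchanged.

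Second, the step you flag as the main obstacle. No quotient by gauge or by $\Aut(T_Z)=(\CC^*)^3$ enters a semiuniversal base; nonzero $H^0$ only costs universality. So the ``$/\!/$(gauge)'' should simply be dropped. The paper gets the analytic statement directly from the Kuranishi fixed-point equation $\beta=u-\tfrac12h[\beta,\beta]$, where $h=\bar\partial^*\mathcal G$. For constant harmonic $u$, $[u,u]$ is again constant, hence harmonic, so $h[u,u]=0$. Uniqueness then forces $\beta(u)=u$, and the obstruction map is exactly $\kappa(u)=\tfrac12H[u,u]=u^2$.

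Your formality route can be completed, but it needs more than you state. Goldman--Millson's quasi-isomorphism invariance only identifies the formal hull. For a zero-differential DGLA, $\mathrm{MC}\to\Def$ is smooth and bijective on tangent spaces, so that hull is the cone $[x,x]=0$. Passing to the analytic germ then needs either their homotopy invariance of the analytic Kuranishi space or Artin approximation. The $\partial\bar\partial$-lemma plays no role here: closure of constant forms under products already gives the quasi-isomorphism.
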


The underlying quotient is Igusa's example
\cite{Igusa1954}, in the form described by Oguiso--Sakurai
\cite[Example~2.17]{OguisoSakurai2001}.  The flat covering identifies its
endomorphism cohomology with invariant constant matrix-valued forms.
These representatives are closed under products, so the Kuranishi map is
exactly quadratic.  Calculating this product gives
\eqref{eq:igusa-kuranishi} and an explicit obstructed class.  Since
$T_Z$ is not simple, $\Def_Z(T_Z)$ is a semiuniversal deformation base;
it is not the coarse moduli space obtained by further identifying bundles
under automorphisms or $S$-equivalence.

\subsection{Relation to earlier work and organization}
Huybrechts proved unobstructedness of tangent bundles of Calabi--Yau
complete intersections in ordinary projective space
\cite[Corollary~2.2]{Huybrechts1995}.  For smooth hypersurfaces
$V\subset\PP^n$ with $n\ge4$ and degree $d\ge2$, Choe--Chung--Hwang
\cite[Theorems~1 and~3]{ChoeChungHwang2025} prove unobstructedness and the
formula
$
h^1(V,\End T_V)=\binom{n+d-1}{d}(d-1).
$
These results concern the covering complete intersections themselves;
rigidity of our quotient comes from taking invariants in a nonzero
cohomology group.  For obstructions, Aspinwall's noncompact calculation
\cite[Section~7]{Aspinwall2014} gives a nonzero Yoneda product and a cubic
potential.  Theorem~\ref{thm:igusa-kuranishi} instead computes the entire
fixed-base deformation germ on a compact threefold.  The comparison is
made more explicitly in Remark~\ref{rem:aspinwall}.

Related phenomena occur in complex dimension four. Gavran proves
rigidity of the tangent bundle for manifolds of $\mathrm{K3}^{[2]}$ type
\cite{Gavran2021}, while Bottini constructs a nonzero Yoneda square for
the tangent bundle of a generalized Kummer fourfold \cite{Bottini2026}.

Section~\ref{sec:prelim} recalls the deformation theory and finite \'etale
descent used throughout.  Section~\ref{sec:rigid} proves
Theorem~\ref{thm:rigid}, and Section~\ref{sec:characters} develops the
character formulas for free complete-intersection quotients.
Section~\ref{sec:igusa-obstructions} proves
Theorem~\ref{thm:igusa-kuranishi}.  Section~\ref{sec:infinite} shows that
an infinite fundamental group rules out tangent-bundle rigidity under
Convention~\ref{conv:cy} and records the scope of the two constructions.

\medskip
\noindent\textbf{Acknowledgments.}
Xueyuan Wan was supported by the National Key R\&D Program of China (Grant No.~2024YFA1013200) and the National Natural Science Foundation of China (Grant No.~12671100). The author used ChatGPT 6 Pro as an auxiliary tool in this work.
The author verified and completed all mathematical arguments and
takes full responsibility for the content of this paper.\vspace{3mm}

\section{Deformation-theoretic preliminaries}\label{sec:prelim}

Let $E$ be a holomorphic vector bundle on a compact complex manifold
$X$. We keep $X$ and the underlying smooth bundle fixed. A nearby
holomorphic structure has the form
\[
D_\alpha=\bar\partial_E+\alpha,
\qquad \alpha\in A^{0,1}(X,\End E),
\]
where $A^{0,q}(X,\End E)$ denotes the space of smooth
endomorphism-valued $(0,q)$-forms. The product on these forms combines
exterior multiplication and composition:
\[
(\omega\otimes A)\wedge(\theta\otimes B)
=(\omega\wedge\theta)\otimes(A\circ B).
\]
For forms $a$ and $b$ of degrees $p$ and $q$, put
$[a,b]=a\wedge b-(-1)^{pq}b\wedge a$. Together with the induced
Dolbeault operator, this bracket makes
\[
\mathfrak g_E=A^{0,\bullet}(X,\End E),
\qquad d=\bar\partial_E,
\]
a differential graded Lie algebra; see
\cite[Example~V.22]{Manetti2004}. Integrability of $D_\alpha$ is equivalent
to the Maurer--Cartan equation
\begin{equation}\label{eq:MC}
\bar\partial_E\alpha+\frac12[\alpha,\alpha]=0.
\end{equation}
Since $\alpha$ has degree one, $\tfrac12[\alpha,\alpha]=\alpha\wedge\alpha$.

Over $\mathbb C[t]/(t^2)$, write $\alpha=t\alpha_1$. The equation becomes
$\bar\partial_E\alpha_1=0$, and the change of trivialization
$g_t=\operatorname{id}_E+tu$ replaces $\alpha_1$ by
$\alpha_1+\bar\partial_Eu$. First-order deformations, with the central
fiber identified with $E$, are therefore parametrized by
\[
H^1(X,\End E)\simeq\Ext^1_X(E,E).
\]
Here local freeness of $E$ gives the last identification.

To extend $\xi=[\alpha_1]$ to second order, one must find $\alpha_2$
such that $\alpha=t\alpha_1+t^2\alpha_2$ satisfies
\eqref{eq:MC} modulo $t^3$. This requires
\[
\bar\partial_E\alpha_2+\alpha_1\wedge\alpha_1=0.
\]
Thus the primary obstruction is the Yoneda square
\begin{equation}\label{eq:primary-obstruction}
\operatorname{ob}_2(\xi)
=[\alpha_1\wedge\alpha_1]
=\xi\cup\xi\in H^2(X,\End E).
\end{equation}
Its vanishing is equivalent to the existence of a second-order extension;
higher-order obstructions may remain.

The analytic Kuranishi construction gives a holomorphic map
\[
\begin{gathered}
\kappa:U\subset H^1(X,\End E)\longrightarrow H^2(X,\End E),\\
\kappa(u)=u\cup u+\text{terms of degree at least three},
\end{gathered}
\]
whose zero-locus germ $(K_E,0)=(\kappa^{-1}(0),0)$ is a semiuniversal
deformation base. In particular, $T_0K_E\simeq H^1(X,\End E)$;
see \cite[Appendix~A]{Huybrechts1995} and
\cite[Chapter~V]{Manetti2004}. A nonzero class in $H^2(X,\End E)$ need not
be an obstruction, but a nonzero Yoneda square proves that $K_E$ is
singular. Throughout, we consider this semiuniversal base before any
further quotient by $\Aut(E)$.

\begin{lemma}\label{lem:CYduality}
If $X$ satisfies Convention~\ref{conv:cy}, then for every vector bundle
$E$,
\[
H^i(X,\End E)\simeq H^{3-i}(X,\End E)^*,
\quad \chi(X,\End E)=0.
\]
In particular, $h^1(X,\End E)=h^2(X,\End E)$.
\end{lemma}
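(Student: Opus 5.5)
The plan is to combine Serre duality with the self-duality of $\End E$ and the triviality of the canonical bundle, and then to compute the Euler characteristic by Hirzebruch--Riemann--Roch.

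\textbf{Duality.} The trace pairing $\End E\otimes\End E\to\OO_X$, $(A,B)\mapsto\operatorname{tr}(AB)$, is fiberwise nondegenerate. Hence it gives an isomorphism $(\End E)^*\simeq\End E$. Convention~\ref{conv:cy} gives $K_X\simeq\OO_X$, and $X$ is a smooth projective threefold, so Serre duality applies:
\[
H^i(X,\End E)\simeq H^{3-i}(X,(\End E)^*\otimes K_X)^*\simeq H^{3-i}(X,\End E)^*.
\]
This proves the first assertion.

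\textbf{Vanishing of $\chi$.} The duality already pairs $h^0$ with $h^3$ and $h^1$ with $h^2$. It therefore gives
\[
\chi(X,\End E)=h^0-h^1+h^2-h^3=0.
\]
As a cross-check, one can also compute $\chi$ by Hirzebruch--Riemann--Roch, $\chi=\int_X \operatorname{ch}(\End E)\operatorname{td}(X)$. Since $\End E$ is self-dual, $\operatorname{ch}_k(\End E)=0$ for odd $k$. Because $c_1(X)=0$, one has $\operatorname{td}(X)=1+\tfrac{1}{12}c_2(X)+\operatorname{td}_3$, where $\operatorname{td}_3=\tfrac{1}{24}c_1c_2=0$. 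The degree-six part is then
\[
\operatorname{ch}_3(\End E)+\operatorname{ch}_1(\End E)\cdot\tfrac{1}{12}c_2(X)+r^2\operatorname{td}_3=0.
\]

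\textbf{The final assertion.} The last claim is the case $i=1$ of the duality: $h^1(X,\End E)=h^2(X,\End E)$.

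The only point needing care is that the trace pairing is a perfect pairing of locally free sheaves. This holds fiberwise because the trace form on matrices is nondegenerate, so no step here is a real obstacle. The hypothesis $H^1(X,\OO_X)=0$ is not used.
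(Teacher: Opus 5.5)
Your proposal is correct and is essentially the paper's proof: the trace pairing identifies $\End E$ with its dual, Serre duality with $K_X\simeq\OO_X$ gives $H^i(X,\End E)\simeq H^{3-i}(X,\End E)^*$, and $\chi(X,\End E)$ vanishes because the paired terms cancel. The Hirzebruch--Riemann--Roch cross-check is an optional extra; your degree-six expansion leaves out the term $\operatorname{ch}_2(\End E)\cdot\tfrac12 c_1(X)$, but that term is zero because $c_1(X)=0$.
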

\begin{proof}
The nondegenerate trace pairing $(A,B)\mapsto\operatorname{tr}(AB)$
identifies $\End E$ with its dual. Serre duality and
$K_X\simeq\OO_X$ then give
\[
H^i(X,\End E)
\simeq H^{3-i}\bigl(X,(\End E)^\vee\otimes K_X\bigr)^*
\simeq H^{3-i}(X,\End E)^*.
\]
The Euler characteristic vanishes by cancellation of the paired terms.
\end{proof}

The Euler characteristic therefore cannot detect rigidity; one must
compute $H^1(X,\End E)$ itself.

There is also a useful distinction between scalar and genuine
deformation directions.  The trace map gives a natural splitting
\[
\End E
=
\OO_X\cdot\operatorname{id}_E
\oplus
\End_0E,
\]
where $\End_0E$ denotes the sheaf of trace-free endomorphisms.  Hence
\[
H^1(X,\End E)
\simeq
H^1(X,\OO_X)
\oplus
H^1(X,\End_0E).
\]
The first summand corresponds to infinitesimal deformations obtained by
tensoring $E$ with a first-order deformation of the trivial line bundle,
whereas the second records the genuine deformation directions.  Under
Convention~\ref{conv:cy} we have $H^1(X,\OO_X)=0$, and therefore
\[
H^1(X,\End T_X)
\simeq
H^1(X,\End_0T_X).
\]
Thus, for the Calabi--Yau threefolds considered here, every nontrivial
first-order deformation of the tangent bundle would automatically be
genuine in the sense of Peternell.

\begin{lemma}
\label{lem:descent}
Let $p:Y\to X$ be a finite \'etale Galois cover of smooth
complex varieties, with covering group $G$, and let $E$ be
a vector bundle on $X$. Pullback induces natural isomorphisms
\[
p^*:H^i(X,\End E)
\xrightarrow{\sim}
H^i\bigl(Y,\End(p^*E)\bigr)^G,
\qquad i\geq 0,
\]
where the superscript $G$ denotes the invariant subspace
for the natural action of the covering group.
These isomorphisms preserve Yoneda products.

More generally, for every coherent sheaf $F$ on $X$,
pullback induces natural isomorphisms
\[
p^*:H^i(X,F)\xrightarrow{\sim}H^i(Y,p^*F)^G.
\]
Moreover, the differential of $p$ identifies
$T_Y$ with $p^*T_X$, and hence
\[
H^i(X,\End T_X)
\simeq H^i(Y,\End T_Y)^G.
\]
\end{lemma}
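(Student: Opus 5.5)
The plan is to prove the general statement for a coherent sheaf $F$ first, and then deduce the endomorphism version and the compatibility with Yoneda products. Since $p$ is finite, it is affine, so $R^jp_*=0$ for $j>0$ on coherent sheaves. The Leray spectral sequence then gives $H^i(Y,p^*F)\simeq H^i(X,p_*p^*F)$. By the projection formula, which is valid because $p_*\OO_Y$ is locally free when $p$ is finite \'etale, we get $p_*p^*F\simeq F\otimes p_*\OO_Y$. The group $G$ acts on $p_*\OO_Y$ through deck transformations, and compatibly on all these identifications.

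The key point is that for a Galois \'etale cover the invariant subsheaf $(p_*\OO_Y)^G$ equals $\OO_X$. Moreover, since $|G|$ is invertible in $\CC$, the averaging operator $e=\frac1{|G|}\sum_{g\in G}g$ is an $\OO_X$-linear idempotent on $p_*\OO_Y$. It splits
\[
p_*\OO_Y=\OO_X\oplus\mathcal{W},
\qquad \mathcal{W}=\ker e,
\]
as $G$-equivariant $\OO_X$-modules. Tensoring with $F$ and taking cohomology, $e$ acts on $H^i(X,F\otimes p_*\OO_Y)\simeq H^i(Y,p^*F)$ as the projector onto the $G$-invariants, because cohomology is an additive functor. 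Its image is $H^i(X,F)$, embedded via the unit $F\to p_*p^*F$, and this embedding is exactly pullback. Naturality of pullback in $F$ is clear. I expect the main obstacle to be exactly this bookkeeping: checking that the $G$-action on $H^i(Y,p^*F)$ defined geometrically, via $g^*$ together with the canonical linearization $g^*p^*F\simeq p^*F$, agrees with the action induced from $p_*\OO_Y$. I would verify this on a Čech cover of $X$ by opens over which $p$ is trivial. On such an open, $p^{-1}(U)=\bigsqcup_{g}gU'$, and both actions permute the sheets in the same way.

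Next take $F=\End E$. Pullback commutes with $\mathcal{H}om$ of locally free sheaves, so $p^*\End E\simeq\End(p^*E)$ canonically and $G$-equivariantly, which gives the first isomorphism. For Yoneda products, I would use Dolbeault representatives, which is the model the paper uses for the Kuranishi map. Pullback of forms $A^{0,\bullet}(X,\End E)\to A^{0,\bullet}(Y,\End p^*E)$ is a morphism of DGLAs: it commutes with $\bar\partial$, with wedge product and with composition. Hence on cohomology it respects cup/Yoneda products. The image lies in the invariants, and the invariants form a subalgebra because $G$ acts by algebra automorphisms.

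Finally, since $p$ is étale, its differential $dp:T_Y\to p^*T_X$ is an isomorphism. This isomorphism is $G$-equivariant because $p\circ g=p$. Applying the previous step with $E=T_X$ and transporting along $dp$ gives $H^i(X,\End T_X)\simeq H^i(Y,\End T_Y)^G$.
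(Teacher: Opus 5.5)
Your proof is correct, but it takes a different route from the paper.

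\textbf{The paper's route.} The paper applies the Cartan--Leray spectral sequence $H^a(G,H^b(Y,p^*F))\Rightarrow H^{a+b}(X,F)$. The rows $a>0$ vanish because taking invariants is exact for a finite group over $\CC$, and the isomorphism is then read off as the edge map. For Yoneda products it argues that flat pullback is exact and commutes with splicing of extensions.

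\textbf{Your route.} You push everything down to $X$. Finiteness of $p$ gives $H^i(Y,p^*F)\simeq H^i(X,F\otimes p_*\OO_Y)$. The $\OO_X$-linear averaging idempotent splits $p_*\OO_Y=\OO_X\oplus\mathcal W$. Additivity of cohomology then identifies the invariants with $H^i(X,F)$.

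\textbf{Comparison.} Your argument avoids equivariant spectral sequences and gives more than the lemma. The complement $H^i(X,F\otimes\mathcal W)$ is the sum of the nontrivial isotypic parts. For abelian $G$ it splits further into cohomology of $F$ twisted by the torsion eigen-line-bundles of $p_*\OO_Y$. The paper's version is shorter given the citation and needs no projection formula. In any case, $p_*p^*F\simeq F\otimes p_*\OO_Y$ holds for every quasi-coherent $F$ once $p$ is affine, so local freeness of $p_*\OO_Y$ is not what makes it work.

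Your Dolbeault proof of multiplicativity is fine and matches the model used later in the Kuranishi computation. It is, however, an analytic argument: comparing it with algebraic Ext requires GAGA, hence properness. The paper's splicing argument is purely algebraic.

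\textbf{A correction to the bookkeeping step.} A finite \'etale cover is in general not trivial over any Zariski open set. Opens of $X$ over which $p$ is trivial therefore exist only in the analytic topology. You do not need triviality at all. Take any affine (or Stein) cover $\{U_\alpha\}$ of $X$.
\begin{itemize}
\item The \v{C}ech complexes of $p^*F$ on $\{p^{-1}U_\alpha\}$ and of $p_*p^*F$ on $\{U_\alpha\}$ coincide term by term.
\item Each $p^{-1}U_\alpha$ is $G$-stable.
\item Each $g^*$ acts identically on the two complexes.
\end{itemize}
So the agreement of the geometric action with the action coming from $p_*\OO_Y$ is tautological.
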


\begin{proof}
For a coherent sheaf $F$ on $X$, the Cartan--Leray
spectral sequence gives
\[
E_2^{a,b}
=
H^a\bigl(G,H^b(Y,p^*F)\bigr)
\Longrightarrow H^{a+b}(X,F);
\]
see \cite[\S5.2, Proposition~5.2.3 and Remark~3,
pp.~202--203]{Grothendieck1957}.
Since $G$ is finite and the coefficients are complex
vector spaces, averaging over $G$ makes the invariants
functor exact. Consequently,
$
H^a\bigl(G,H^b(Y,p^*F)\bigr)=0
$ for $a>0$.
The spectral sequence therefore degenerates, and its edge
maps, which are induced by pullback, yield
\[
H^i(X,F)\xrightarrow{\sim}H^i(Y,p^*F)^G.
\]

Taking $F=\End E$ and using the natural identification
$
p^*(\End E)\simeq\End(p^*E)
$
proves the assertion for endomorphism cohomology.
Since $p$ is \'etale, it is flat, so pullback is exact
and commutes with the splicing of extensions defining
Yoneda products. Thus, under the usual identifications
of endomorphism cohomology with self-$\Ext$ groups,
$
p^*(\xi\cup\eta)=p^*\xi\cup p^*\eta.
$

Finally, the \'etale condition implies that
$
dp:T_Y\xrightarrow{\sim}p^*T_X
$
is an isomorphism. It is $G$-equivariant because
$p\circ g=p$ for every $g\in G$, which proves the
tangent-bundle assertion.
\end{proof}

For a finite quotient, Lemma~\ref{lem:descent} allows rigidity to be
checked by taking invariants: a large space $H^1(Y,\End T_Y)$ may have
no invariant vectors. When $Y$ is an abelian variety, the same lemma
lets us compute obstruction products using invariant constant
matrix-valued forms on the cover.

\section{A stable rigid tangent bundle from four quadrics}\label{sec:rigid}

We first construct the free quotient, then compute the representation on
the tangent-bundle cohomology of its cover.

\subsection{An explicit smooth free quotient}

Index the homogeneous coordinates of $\PP^7$ by $v\in\FF_2^3$ and write
them as $[x_v]_{v\in\FF_2^3}$. Identifying $\FF_2$ with $\{0,1\}$, set
\[
t_v=v_1+2v_2+4v_3\in\{0,1,\ldots,7\}.
\]
Over $\CC$, define four diagonal quadrics
\[
q_a=\sum_{v\in\FF_2^3}t_v^a x_v^2,
\qquad a=0,1,2,3,
\]
with $t_v^0=1$, including when $t_v=0$, and let
\begin{equation}\label{eq:quadrics}
Y=V(q_0,q_1,q_2,q_3)\subset\PP^7
\end{equation}
be their common zero locus.

If the coordinates are ordered so that $t_v=0,1,\ldots,7$, the four
equations can be written compactly as
$
(q_0,q_1,q_2,q_3)^\top
=
C
(
x_0^2, x_1^2,\cdots, x_7^2
)^\top,
$
where
\begin{equation}\label{eq:vandermonde}
C=
\begin{pmatrix}
1&1&1&1&1&1&1&1\\
0&1&2&3&4&5&6&7\\
0&1&4&9&16&25&36&49\\
0&1&8&27&64&125&216&343
\end{pmatrix}.
\end{equation}
Every four-column minor of $C$ is a Vandermonde determinant
\[
\prod_{1\le r<s\le4}(i_s-i_r)\ne0,
\qquad 0\le i_1<i_2<i_3<i_4\le7.
\]
Thus any set of at most four columns is linearly independent. This will
prove both smoothness and freeness.

We write
\[
\langle g,v\rangle
=
g_1v_1+g_2v_2+g_3v_3
\pmod 2
\]
for the standard bilinear pairing on $\FF_2^3$.

\begin{proposition}\label{prop:geometry}
The variety $Y$ in \eqref{eq:quadrics} is a smooth connected simply
connected Calabi--Yau threefold.  The group $G=\FF_2^3$, acting by
\begin{equation}\label{eq:sign-action}
g\cdot x_v=(-1)^{\langle g,v\rangle}x_v,
\end{equation}
acts freely and preserves a nowhere-vanishing holomorphic three-form.
Consequently $X=Y/G$ satisfies Convention~\ref{conv:cy} and
$\pi_1(X)\simeq G$.
\end{proposition}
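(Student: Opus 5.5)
The plan is to prove smoothness from the Jacobian criterion, get the Calabi--Yau properties of $Y$ from adjunction and Lefschetz, check freeness point by point, and then descend everything to $X$.

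\textbf{Smoothness.} At a point $x\in Y$ the Jacobian of $(q_0,\dots,q_3)$ is $2C\,\mathrm{diag}(x_0,\dots,x_7)$, with columns indexed by $t_v$. Its rank equals the rank of the submatrix of $C$ formed by the columns where $x_v\neq0$, which is $\min(4,\#\{v:x_v\ne0\})$ by the Vandermonde remark. So rank $<4$ forces at most three nonzero coordinates. But then the equations $\sum t_v^a x_v^2=0$ express a linear dependence among at most three columns of $C$ with nonzero coefficients $x_v^2$, which is impossible. Hence every point of $Y$ has rank $4$: $Y$ is a smooth complete intersection of dimension $3$.

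\textbf{Calabi--Yau properties of $Y$.} Adjunction gives $K_Y=\OO_Y(-8+2\cdot4)=\OO_Y$. The Koszul resolution gives $H^1(Y,\OO_Y)=H^2(Y,\OO_Y)=0$ and connectedness. The Lefschetz hyperplane theorem for complete intersections of dimension $\ge2$ gives $\pi_1(Y)=1$.

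\textbf{Freeness.} Take $g\ne0$ and suppose $g\cdot x=x$ in $\PP^7$, so $g\cdot x=\lambda x$ with $\lambda=\pm1$. Then $x$ is supported on $\{v:\langle g,v\rangle=0\}$ or on its complement, each of size $4$. Restrict the four equations to such a support $S$ with $|S|=4$. The $4\times4$ submatrix of $C$ on $S$ is invertible, so $x_v^2=0$ for all $v\in S$, hence $x=0$, a contradiction. The action is therefore free.

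\textbf{Invariance of the $3$-form.} $G$ preserves each $q_a$, since only squares occur. The generator of $H^0(K_Y)$ is the Poincaré residue $\mathrm{Res}\,\Omega/(q_0q_1q_2q_3)$ with $\Omega=\sum\pm x_v\,dx_0\wedge\cdots\widehat{dx_v}\cdots$. Under $g$, $\Omega$ scales by $\det g=\prod_v(-1)^{\langle g,v\rangle}$. This is $+1$ because for $g\ne0$ exactly four $v$ have $\langle g,v\rangle=1$. So the nowhere-vanishing holomorphic three-form is invariant.

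\textbf{Descent to $X$.} $X=Y/G$ is a smooth projective threefold and $Y\to X$ is étale Galois. The invariant form descends to a nowhere-vanishing section of $K_X$, so $K_X\simeq\OO_X$. Lemma~\ref{lem:descent} gives $H^1(X,\OO_X)=H^1(Y,\OO_Y)^G=0$. Since $Y$ is simply connected and the action is free, $Y$ is the universal cover of $X$, so $\pi_1(X)\simeq G$.

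The only step requiring care is the smoothness argument, specifically the case with few nonzero coordinates. The column independence of $C$ handles it uniformly, so I expect no real obstacle.
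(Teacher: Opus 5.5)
Your proposal is correct and follows the paper's proof essentially step for step: the Vandermonde independence of any four columns of $C$ drives both the Jacobian smoothness check and the exclusion of fixed points (which lie in the two coordinate $\PP^3$'s of each $D_g$), while adjunction, the Koszul resolution, Lefschetz, the residue three-form with $\det D_g=1$, and finite \'etale descent handle the rest. The differences are only cosmetic. You read off the dimension directly from the full rank of the Jacobian instead of first using the squaring map. You also cite the Lefschetz theorem for smooth complete intersections and the projectivity of finite quotients, whereas the paper checks smoothness of the intermediate intersections $Y_r$ and descends $\OO_Y(1)$ explicitly.
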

\begin{proof}
Consider the coordinatewise squaring map
\[
s:\PP^7\longrightarrow\PP^7,
\qquad
[x_v]_{v\in\FF_2^3}\longmapsto[x_v^2]_{v\in\FF_2^3}.
\]
The map is finite and surjective, and
$Y=s^{-1}(\PP(\ker C))$, where $\PP(\ker C)\simeq\PP^3$.
Thus $Y$ is nonempty. Every irreducible component has dimension at most
three, since $s$ is finite and its image lies in $\PP(\ker C)$.
On the other hand, four equations in $\PP^7$ give dimension at least
three for every component. Hence $Y$ is pure of dimension three.

Every point of $Y$ has at least five nonzero coordinates. Otherwise,
$C(x_v^2)_v=0$ would give a nontrivial relation among at most four
columns of $C$, contrary to the Vandermonde property.

Finally, the Jacobian matrix of the four quadrics is
\[
J(x)
=
\left(\frac{\partial q_a}{\partial x_v}\right)_{a,v}
=
2C\,\operatorname{diag}(x_v).
\]
Choosing four nonzero coordinates gives a nonzero $4\times4$ minor
of $J(x)$. The Jacobian criterion therefore shows that $Y$ is a smooth
complete intersection of dimension three.

Put $P=\PP^7_{\CC}$, and let $\iota:Y\hookrightarrow P$
denote the inclusion. Since $Y$ is a smooth complete
intersection of four quadrics, its normal bundle is
$
N_{Y/P}\simeq\OO_Y(2)^{\oplus4}.
$
The adjunction formula
\cite[Proposition~II.8.20]{Hartshorne1977} therefore gives
\[
\begin{aligned}
K_Y
\simeq K_P|_Y\otimes\det N_{Y/P}\simeq \OO_Y(-8)\otimes\OO_Y(8)
\simeq\OO_Y.
\end{aligned}
\]

We next compute the cohomology of $\OO_Y$.
The four defining quadrics form a regular sequence, so
their Koszul complex is a resolution of $\iota_*\OO_Y$;
see \cite[Tag~062F]{Stacks}. Explicitly, it is
\[
\begin{aligned}
0\longrightarrow\OO_P(-8)
&\longrightarrow\OO_P(-6)^{\oplus4}
\longrightarrow\OO_P(-4)^{\oplus6}\\
&\longrightarrow\OO_P(-2)^{\oplus4}
\longrightarrow\OO_P
\longrightarrow\iota_*\OO_Y
\longrightarrow0.
\end{aligned}
\]
The standard cohomology formula for line bundles on
projective space \cite[Tag~01XT]{Stacks} shows that
$\OO_P(-2)$, $\OO_P(-4)$, and $\OO_P(-6)$ have no
cohomology in any degree, while
\[
H^j(P,\OO_P(-8))=0
\qquad\text{for }j<7.
\]
Writing $\mathcal I_Y$ for the ideal sheaf of $Y$,
the truncated Koszul resolution consequently gives
\[
H^j(P,\mathcal I_Y)
\simeq H^{j+3}(P,\OO_P(-8))=0,
\qquad 0\leq j\leq3.
\]
Applying cohomology to
$
0\to\mathcal I_Y
\to\OO_P
\to\iota_*\OO_Y
\to 0
$
now yields
\[
H^0(Y,\OO_Y)=\CC,
\qquad
H^1(Y,\OO_Y)=H^2(Y,\OO_Y)=0.
\]
In particular, $Y$ is connected.

To apply the Lefschetz hyperplane theorem, set
\[
Y_0=P,\qquad
Y_r=V(q_0,\ldots,q_{r-1})
\quad (1\leq r\leq4).
\]
The same Vandermonde and Jacobian argument, applied to
the first $r$ rows of $C$, shows that $Y_r$ is smooth
of dimension $7-r$. Each inclusion
$Y_r\subset Y_{r-1}$ is the zero locus of a transverse
section of the positive line bundle
$\OO_{Y_{r-1}}(2)$.
The Lefschetz theorem, applied successively to these
inclusions, gives
\[
\pi_1(Y)\simeq\pi_1(P)=\{1\},
\qquad
H^2(P,\ZZ)\xrightarrow{\sim}H^2(Y,\ZZ);
\]
see \cite[Theorem~I and its corollary, p.~212]{Bott1959}.
Thus $Y$ is simply connected, and
\[
H^2(Y,\ZZ)
=
\ZZ\cdot c_1(\OO_Y(1)).
\]

Finally, we use the analytic exponential sequence.
Since $Y$ is projective, GAGA identifies its algebraic
and analytic coherent cohomology and Picard groups
\cite{Serre1956GAGA}. The exponential sequence gives
the exact segment
\[
H^1(Y,\OO_Y)
\longrightarrow \operatorname{Pic}(Y)
\xrightarrow{c_1} H^2(Y,\ZZ)
\longrightarrow H^2(Y,\OO_Y).
\]
The two outer groups vanish, so $c_1$ is an isomorphism.
Consequently,
$
\operatorname{Pic}(Y)
=
\ZZ\cdot[\OO_Y(1)].
$

For $g\in G=\FF_2^3$, let
$
D_g=\operatorname{diag}
\bigl((-1)^{\langle g,v\rangle}\bigr)_{v\in\FF_2^3}
$
be the corresponding linear transformation of $\CC^8$.
Since the action only changes the signs of the coordinates,
each defining quadric is invariant:
\[
q_a(D_gx)
=
\sum_v t_v^a
\bigl((-1)^{\langle g,v\rangle}x_v\bigr)^2
=
q_a(x),
\qquad a=0,1,2,3.
\]
Thus the action on $\PP^7$ preserves $Y$.

For $g\ne0$, the linear functional $v\mapsto\langle g,v\rangle$
takes each value four times. Thus $D_g$ has two four-dimensional
eigenspaces, and its projective fixed locus is the union of the
corresponding coordinate $\PP^3$'s. Every point in this locus has at most
four nonzero coordinates, so it is disjoint from $Y$. This proves
freeness. The action also remains faithful after projectivization,
since no $D_g$ with $g\ne0$ is scalar.

We also verify that $G$ preserves a nowhere-vanishing
holomorphic three-form on $Y$. Relabel the homogeneous
coordinates temporarily as $x_0,\ldots,x_7$, and set
$
\Omega
=
\sum_{j=0}^7(-1)^j x_j\,
dx_0\wedge\cdots\wedge\widehat{dx_j}
\wedge\cdots\wedge dx_7.
$
The adjunction trivialization of $K_Y$ is represented
by the complete-intersection residue
\[
\omega_Y
=
\operatorname{Res}_{q_0=\cdots=q_3=0}
\left(\frac{\Omega}{q_0q_1q_2q_3}\right).
\]
Here the numerator and denominator both have homogeneous
weight eight, so the quotient is a well-defined
meromorphic seven-form on $\PP^7$. Since the four
quadrics meet transversely, its residue is a
nowhere-vanishing holomorphic three-form on $Y$.

Under the coordinate action one has
$
g^*\Omega=(\det D_g)\Omega.
$
For $g\ne0$, the four negative eigenvalues give
$
\det D_g=(-1)^4=1,
$
and the same determinant identity holds for the identity
element. Since every $q_a$ is invariant, the residue
formula therefore gives
$
g^*\omega_Y=\omega_Y
\text{ for all }g\in G.
$

Let $X=Y/G$ and let $p:Y\to X$ be the quotient map.
The free finite action gives a smooth quotient and a
finite \'etale Galois cover of degree $|G|=8$;
see \cite[Tag~07S7]{Stacks}.
The quotient is projective: the natural $G$-linearization
of $\OO_Y(1)$ descends it to a line bundle $L$ on the
proper quotient $X$, with $p^*L\simeq\OO_Y(1)$.
The line bundle $L$ is ample because ampleness can be
checked after finite surjective pullback
\cite[Tag~0B5V]{Stacks}.

The invariant form $\omega_Y$ descends to a holomorphic
three-form $\omega_X$ satisfying
$
p^*\omega_X=\omega_Y.
$
Since $p$ is \'etale and $\omega_Y$ is nowhere vanishing,
$\omega_X$ is also nowhere vanishing. Thus
$
K_X\simeq\OO_X.
$
Furthermore, Lemma~\ref{lem:descent} gives
\[
H^1(X,\OO_X)
\simeq H^1(Y,\OO_Y)^G
=0.
\]
Finally, $Y$ is connected and simply connected, so
$p:Y\to X$ is the universal covering map. Its deck
transformation group is $G$, and therefore
$
\pi_1(X)\simeq G\simeq(\ZZ/2\ZZ)^3.
$
\end{proof}

The same construction works for any $4\times8$ coefficient matrix
whose four-column minors are nonzero. The Vandermonde matrix gives
an explicit member of this open set.

\subsection{A cohomology lemma for complete intersections}
We use the Euler, conormal, and normal exact sequences, following
\cite[\S\S2--4]{DGKM1989} and \cite{EastwoodHubsch1990}.\footnote{The
splitting on p.~119 of \cite{DGKM1989} shows that their notation
$\End T$ denotes the trace-free part of $T\otimes T^*$.
Its $H^1$ agrees with that of the full endomorphism bundle used here,
since $H^1(Y,\OO_Y)=0$; see also the note after their equation~(2.13).}
As \cite[\S4]{DGKM1989} explains, polynomial deformation counts need
not capture the full cohomology group. We compute that group directly
and retain the natural group actions on the exact sequences.
Let $Y\subset\PP^N$ be a smooth
Calabi--Yau complete intersection of dimension three, defined by a
regular sequence of degrees $d_1,\ldots,d_r\geq2$.  Put
\[
\begin{aligned}
W&=H^0(Y,\OO_Y(1)),& R_k&=H^0(Y,\OO_Y(k)),\\
N_Y&=N_{Y/\PP^N},& F&=T_{\PP^N}|_Y\otimes\Omega_Y^1.
\end{aligned}
\]
For $k<0$ we set $R_k=0$. Then
\begin{equation}\label{eq:ACM}
H^1(Y,\OO_Y(k))=H^2(Y,\OO_Y(k))=0
\quad\text{for every }k\in\ZZ.
\end{equation}
In fact, positive $k$ also follows from Kodaira vanishing,
negative $k$ from Serre duality, and $k=0$ from the complete-intersection
calculation above.  Restriction identifies $R_k$ with the degree-$k$
part of the homogeneous coordinate ring.  In particular,
$W\otimes R_{k-1}\to R_k$ is surjective for $k\geq1$.

For hypersurfaces, the following cohomology calculation is
\cite[Lemma~9]{ChoeChungHwang2025}. We give the complete-intersection
version needed here.

\begin{lemma}\label{lem:ambient}
In this situation,
\[
H^0(Y,F)=\CC,\qquad H^1(Y,F)=0.
\]
The generator of $H^0(Y,F)$ is the tangent inclusion
$T_Y\hookrightarrow T_{\PP^N}|_Y$.
\end{lemma}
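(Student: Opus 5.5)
Tensor the restricted Euler sequence with $\Omega^1_Y$ and control each term with the conormal sequence. The restricted Euler sequence is
\[
0\to\OO_Y\to W^*\otimes\OO_Y(1)\to T_{\PP^N}|_Y\to0 .
\]
Tensoring with $\Omega^1_Y$ gives
\[
0\to\Omega^1_Y\to W^*\otimes\Omega^1_Y(1)\to F\to0 .
\]
The cohomology of $\Omega^1_Y$ is known: $H^0=0$, $H^1\simeq H^{1,1}(Y)$, and $H^2\simeq H^{1,2}(Y)$. So the problem reduces to $H^i(Y,\Omega^1_Y(1))$ for $i=0,1,2$, together with the connecting maps.

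To compute $\Omega^1_Y(1)$, twist the conormal sequence $0\to N_Y^\vee\to\Omega^1_{\PP^N}|_Y\to\Omega^1_Y\to0$ by $\OO_Y(1)$. Here $N_Y^\vee(1)=\bigoplus_j\OO_Y(1-d_j)$. Since every $d_j\ge2$, \eqref{eq:ACM} and $R_{k}=0$ for $k<0$ show that $H^0$ and $H^1$ of this sheaf vanish, while $H^2$ also vanishes by \eqref{eq:ACM}. Hence
\[
H^i(\Omega^1_Y(1))\simeq H^i(\Omega^1_{\PP^N}|_Y(1)),\qquad i=0,1,
\]
and the map $H^2(\Omega^1_{\PP^N}|_Y(1))\to H^2(\Omega^1_Y(1))$ is injective.

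Next, the dual Euler sequence twisted by one is
\[
0\to\Omega^1_{\PP^N}|_Y(1)\to W\otimes\OO_Y\to\OO_Y(1)\to0 .
\]
Because $W\to R_1$ is an isomorphism and $H^1(\OO_Y)=H^2(\OO_Y)=0$, we get $H^0(\Omega^1_{\PP^N}|_Y(1))=0$ and $H^1=0$, and $H^2(\Omega^1_{\PP^N}|_Y(1))\simeq H^1(\OO_Y(1))=0$. Therefore
\[
H^0(\Omega^1_Y(1))=H^1(\Omega^1_Y(1))=0 .
\]

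Returning to the tensored Euler sequence, the long exact sequence now gives
\[
H^0(F)\xrightarrow{\sim}H^1(\Omega^1_Y)
\]
and an injection $H^1(F)\hookrightarrow H^2(\Omega^1_Y)$ with cokernel inside $W^*\otimes H^2(\Omega^1_Y(1))$. Lefschetz (as in Proposition~\ref{prop:geometry}, applied in general to complete intersections of dimension $3$) gives $H^1(\Omega^1_Y)=\CC\cdot c_1(\OO_Y(1))$. Hence $H^0(F)=\CC$. The tangent inclusion is a nonzero global section of $F$, so it spans $H^0(F)$; under the coboundary it corresponds to the Atiyah/Euler class, which is consistent.

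The main obstacle is $H^1(F)=0$, that is, showing that the connecting map
\[
H^2(\Omega^1_Y)\to W^*\otimes H^2(\Omega^1_Y(1))
\]
is injective. This map is cup product with the Euler-sequence extension class, i.e.\ multiplication by the hyperplane class, dualized. By Serre duality ($K_Y\simeq\OO_Y$) its dual is
\[
W\otimes H^1(T_Y(-1))\to H^1(T_Y),
\]
and I must show this is surjective. I would identify $H^1(T_Y(-1))$ and $H^1(T_Y)$ through the normal sequence $0\to T_Y\to T_{\PP^N}|_Y\to N_Y\to0$. Twisting by $-1$ and $0$, and using the twisted Euler sequences together with \eqref{eq:ACM}, gives $H^1(T_Y(k))\simeq \operatorname{coker}\bigl(H^0(T_{\PP^N}|_Y(k))\to H^0(N_Y(k))\bigr)$. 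For $k=-1$ this cokernel is $\bigoplus_j R_{d_j-1}$ modulo the image of $H^0(T_{\PP^N}|_Y(-1))\simeq W^*\otimes R_0$ (the image of $W^*\otimes R_0$ under the partial derivatives $\partial f_j/\partial x$). For $k=0$ it is the usual Jacobian quotient of $\bigoplus_j R_{d_j}$. The multiplication map is induced by $W\otimes R_{d_j-1}\to R_{d_j}$, which is surjective by the surjectivity noted before the lemma. So the dual map is surjective, the connecting map is injective, and $H^1(F)=0$. The care needed lies in checking that the vanishing $H^1(T_{\PP^N}|_Y(k))=0$ for $k=-1,0$ holds (again from the Euler sequence and \eqref{eq:ACM}) and in tracking that the connecting map really is this multiplication.
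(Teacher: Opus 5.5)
Your proposal is correct and follows essentially the same route as the paper: the twisted cotangent Euler and conormal sequences give $H^0(Y,\Omega^1_Y(1))=H^1(Y,\Omega^1_Y(1))=0$, hence $H^0(Y,F)\simeq H^1(Y,\Omega^1_Y)\simeq\CC$. Likewise, $H^1(Y,F)=0$ reduces by Serre duality to surjectivity of $W\otimes H^1(Y,T_Y(-1))\to H^1(Y,T_Y)$, which follows by comparing with $W\otimes H^0(Y,N_Y(-1))\to H^0(Y,N_Y)$ through the surjective connecting maps of the twisted normal sequences. One small slip: the map $H^2(Y,\Omega^1_Y)\to W^*\otimes H^2(Y,\Omega^1_Y(1))$ is neither a connecting map nor a cup product with the Euler extension class; it is induced by the first arrow of the sequence (multiplication by the coordinates), which is why its dual is the multiplication map you correctly wrote down.
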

\begin{proof}
Write $P=\PP^N$ and
$
E=\Omega_P^1(1)|_Y,
$ $
F=\Omega_Y^1\otimes T_P|_Y
  =\mathcal{H}om(T_Y,T_P|_Y).
$
We first compute the two lowest cohomology groups of
$\Omega_Y^1(1)$.

The cotangent Euler sequence, twisted by $\OO_P(1)$ and
restricted to $Y$, is
\begin{equation}\label{eq:euler-one}
0\longrightarrow E
\longrightarrow W\otimes\OO_Y
\longrightarrow\OO_Y(1)
\longrightarrow0;
\end{equation}
see \cite[II, Theorem~8.13]{Hartshorne1977}.
The right-hand map sends $w\otimes f$ to $f(w|_Y)$.
The preceding Koszul calculation gives
$H^0(Y,\OO_Y)=\mathbb C$
and shows that the restriction map
$
W=H^0(P,\OO_P(1))
\to H^0(Y,\OO_Y(1))
$
is an isomorphism: it is surjective, and there are no
linear relations because $d_j\geq2$.
Under these identifications, the map on global sections
in \eqref{eq:euler-one} is the identity on $W$.
Since $H^1(Y,\OO_Y)=0$ by \eqref{eq:ACM}, its cohomology
sequence gives
\[
H^0(Y,E)=H^1(Y,E)=0.
\]

Next, the conormal sequence, tensored with $\OO_Y(1)$, is
\begin{equation}\label{eq:conormal-one}
0\longrightarrow N_Y^*(1)
\longrightarrow E
\longrightarrow\Omega_Y^1(1)
\longrightarrow0;
\end{equation}
see \cite[II, Theorem~8.17]{Hartshorne1977}.
Because $Y$ is a complete intersection,
$
N_Y^*(1)\simeq\bigoplus_{j=1}^r\OO_Y(1-d_j).
$
Equation~\eqref{eq:ACM} therefore implies
\[
H^1(Y,N_Y^*(1))=H^2(Y,N_Y^*(1))=0.
\]
Combining these vanishings with those of $E$ in the
cohomology sequence of \eqref{eq:conormal-one}, we obtain
\begin{equation}\label{eq:omega-one-vanish}
H^0(Y,\Omega_Y^1(1))
=
H^1(Y,\Omega_Y^1(1))
=0.
\end{equation}

We now compute $H^0(Y,F)$.
Dualizing the cotangent Euler sequence gives the tangent
Euler sequence. Restricting it to $Y$ and tensoring with
$\Omega_Y^1$, we obtain
\begin{equation}\label{eq:euler-F}
0\longrightarrow\Omega_Y^1
\longrightarrow W^*\otimes\Omega_Y^1(1)
\longrightarrow F
\longrightarrow0.
\end{equation}
The weak Lefschetz theorem for smooth complete intersections
\cite[Proposition~5.4]{Movasati2021} and Hodge decomposition give
$
h^1(Y,\Omega_Y^1)=h^{1,1}(Y)=1.
$
Using \eqref{eq:omega-one-vanish} in the cohomology sequence
of \eqref{eq:euler-F}, we get an isomorphism
\[
H^0(Y,F)
\simeq H^1(Y,\Omega_Y^1)
\simeq\mathbb C.
\]
The tangent inclusion
$\iota_T:T_Y\hookrightarrow T_P|_Y$
is a nonzero section of
$F=\mathcal{H}om(T_Y,T_P|_Y)$.
It therefore spans $H^0(Y,F)$.

To prove that $H^1(Y,F)=0$, the next part of the same
cohomology sequence gives
\[
0\longrightarrow H^1(Y,F)
\longrightarrow H^2(Y,\Omega_Y^1)
\xrightarrow{\alpha}
W^*\otimes H^2(Y,\Omega_Y^1(1)).
\]
Since $\dim Y=3$ and $K_Y\simeq\OO_Y$, Serre duality
identifies the dual of $\alpha$ with the multiplication map
\begin{equation}\label{eq:multiply-tangent}
\mu:
W\otimes H^1(Y,T_Y(-1))
\longrightarrow H^1(Y,T_Y).
\end{equation}
In coordinates, this map sends
$\sum_i x_i\otimes\beta_i$ to $\sum_i x_i\beta_i$.
Consequently,
\[
H^1(Y,F)^*\simeq\operatorname{coker}(\mu).
\]
It remains to show that $\mu$ is surjective.

For $t=-1,0$, the restricted tangent Euler sequence is
\[
0\longrightarrow\OO_Y(t)
\longrightarrow W^*\otimes\OO_Y(t+1)
\longrightarrow T_P(t)|_Y
\longrightarrow0.
\]
By \eqref{eq:ACM}, both $H^1(Y,\OO_Y(t+1))$ and
$H^2(Y,\OO_Y(t))$ vanish. Hence
\[
H^1(Y,T_P(-1)|_Y)=H^1(Y,T_P|_Y)=0.
\]
The normal sequences
\[
0\longrightarrow T_Y(t)
\longrightarrow T_P(t)|_Y
\longrightarrow N_Y(t)
\longrightarrow0
\qquad(t=-1,0)
\]
therefore give surjective connecting maps
\[
\delta_t:
H^0(Y,N_Y(t))
\twoheadrightarrow H^1(Y,T_Y(t)).
\]
These maps commute with multiplication by linear forms,
by naturality of the cohomology long exact sequence.
Thus we have a commutative diagram
\[
\begin{tikzcd}[column sep=large,row sep=large]
W\otimes H^0(Y,N_Y(-1))
  \arrow[r]
  \arrow[d,two heads,"\mathrm{id}_W\otimes\delta_{-1}"']
&
H^0(Y,N_Y)
  \arrow[d,two heads,"\delta_0"]
\\
W\otimes H^1(Y,T_Y(-1))
  \arrow[r,"\mu"']
&
H^1(Y,T_Y).
\end{tikzcd}
\]
Since $N_Y\simeq\bigoplus_j\OO_Y(d_j)$, the top map is
the direct sum of the multiplication maps
\[
W\otimes R_{d_j-1}\longrightarrow R_{d_j},
\qquad
R_k=H^0(Y,\OO_Y(k)).
\]
Each is surjective by the preceding coordinate-ring
calculation: every homogeneous polynomial of degree $d_j$
is a sum of linear coordinates times polynomials of
degree $d_j-1$.

Surjectivity of the top and right-hand maps now implies that $\mu$
is surjective. Hence $H^1(Y,F)^*=\operatorname{coker}(\mu)=0$.
\end{proof}

The next identification comes from the normal-sequence description of
tangent-bundle deformations; compare \cite[Lemma~2.1]{Huybrechts1995}
and, for hypersurfaces, \cite[proof of Theorem~1]{ChoeChungHwang2025}.
For the quintic threefold, \cite[equation~(4.6)]{DGKM1989} already
identifies $H^1(Y,\End T_Y)$ with $H^0(\PP^4,\Omega_{\PP^4}^1(5))$.
We record the natural equivariant isomorphism needed for the quotient.

\begin{proposition}\label{prop:normal-formula}
There is a natural isomorphism
\begin{equation}\label{eq:normal-formula}
H^1(Y,\End T_Y)\simeq H^0(Y,\Omega_Y^1\otimes N_Y).
\end{equation}
It is equivariant for every finite linear group preserving $Y$.
\end{proposition}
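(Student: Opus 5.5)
We will tensor the normal sequence of $Y\subset\PP^N$ with $\Omega_Y^1$ and read off the cohomology sequence, using Lemma~\ref{lem:ambient} to kill the ambient terms. The sequence is
\[
0\longrightarrow \End T_Y=\Omega_Y^1\otimes T_Y
\longrightarrow F=\Omega_Y^1\otimes T_{\PP^N}|_Y
\longrightarrow \Omega_Y^1\otimes N_Y\longrightarrow 0,
\]
which is exact because $\Omega_Y^1$ is locally free. Its long exact sequence begins
\[
0\to H^0(\End T_Y)\to H^0(F)\to H^0(\Omega_Y^1\otimes N_Y)\xrightarrow{\ \delta\ } H^1(\End T_Y)\to H^1(F).
\]
By Lemma~\ref{lem:ambient}, $H^1(Y,F)=0$, so $\delta$ is surjective.

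Next we show that $H^0(F)\to H^0(\Omega_Y^1\otimes N_Y)$ is zero, which makes $\delta$ injective. By Lemma~\ref{lem:ambient}, $H^0(Y,F)=\CC$ is spanned by the tangent inclusion $\iota_T$. This class lies in the image of $H^0(\End T_Y)$, being the image of $\operatorname{id}_{T_Y}$: composing $\operatorname{id}_{T_Y}$ with $T_Y\hookrightarrow T_{\PP^N}|_Y$ gives exactly $\iota_T$. Exactness then forces the next map to vanish. Equivalently, the image of $\iota_T$ in $\mathcal Hom(T_Y,N_Y)$ is the composite $T_Y\to T_{\PP^N}|_Y\to N_Y$, which is zero. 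So $\delta$ is an isomorphism, which gives \eqref{eq:normal-formula}. As a byproduct, $H^0(\End T_Y)=\CC$.

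For equivariance, let $\Gamma\subset GL(N+1)$ be a finite group preserving $Y$. It acts compatibly on $T_Y$, $T_{\PP^N}|_Y$ and $N_Y$, so the normal sequence is a sequence of $\Gamma$-equivariant sheaves. Tensoring with $\Omega_Y^1$, which carries its natural action, keeps the sequence $\Gamma$-equivariant. Connecting homomorphisms are natural, so $\delta$ commutes with the induced actions on cohomology.

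The point needing the most care is the action on $N_Y$. We should use the intrinsic action coming from the differential, not one transported through a chosen splitting $N_Y\simeq\bigoplus\OO_Y(d_j)$. The latter would twist by the action on the span of the defining equations, and that twist must be recorded later: for the sign-change action in the first example it is trivial. In the proof we only need naturality of the sequence itself, so this issue does not arise.
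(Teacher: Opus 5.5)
Your proposal is correct and follows the paper's proof: you tensor the normal sequence with $\Omega_Y^1$, use Lemma~\ref{lem:ambient} to see that $H^0(Y,F)$ is spanned by the tangent inclusion, which maps to zero in $H^0(Y,\Omega_Y^1\otimes N_Y)$ (so the connecting map is injective), and use $H^1(Y,F)=0$ for surjectivity. Equivariance then follows from naturality of the sequence. Your side remark about using the intrinsic action on $N_Y$ rather than one transported through a splitting matches how the paper later records this twist as the factor $U^*$ in $N_Y\simeq\OO_Y(2)\otimes U^*$.
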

\begin{proof}
Tensor the normal sequence by $\Omega_Y^1$ to obtain
\[
0\longrightarrow\End T_Y\longrightarrow F\longrightarrow
\Omega_Y^1\otimes N_Y\longrightarrow0.
\]
The generator of $H^0(Y, F)$ maps to zero, since the composite of the
tangent inclusion and the normal projection is zero.  The connecting
map is therefore injective, and $H^1(Y, F)=0$ makes it surjective.
All maps used are natural, which proves equivariance.
\end{proof}

\subsection{The representation for four quadrics}
Return to a smooth intersection of four quadrics in $\PP^7$, and let
$U=\mathrm{span}_{\mathbb{C}}\{q_0,q_1,q_2,q_3\}\subset\Sym^2W$ be its four-dimensional space of equations.
The natural normal bundle is
$N_Y=\mathcal{H}om(U,\mathcal{O}_Y(2))=\OO_Y(2)\otimes U^*$, so
\eqref{eq:normal-formula} becomes
\[
H^1(Y,\End T_Y)\simeq U^*\otimes H^0(Y,\Omega_Y^1(2)).
\]
Set $E_2=\Omega_P^1(2)|_Y$.
The restricted Euler sequence gives
\[
0\longrightarrow H^0(Y,E_2)
\longrightarrow W\otimes W
\xrightarrow{\mu} R_2
\longrightarrow0,
\]
where $\mu$ is multiplication of linear forms, followed by
restriction to $Y$. This map is surjective, and
$
R_2=\operatorname{Sym}^2W/U.
$

To describe its kernel, let
\[
s:\operatorname{Sym}^2W\longrightarrow W\otimes W,
\quad
s(ab)=\frac12(a\otimes b+b\otimes a)
\]
be the natural symmetrization map. Multiplication kills the
alternating tensors, while a symmetric tensor restricts to
zero on $Y$ precisely when it represents a defining quadric.
Thus
\[
H^0(Y,E_2)=\ker\mu
=\Lambda^2W\oplus s(U).
\]

The twisted conormal sequence is
\[
0\longrightarrow U\otimes\mathcal O_Y
\xrightarrow{d}E_2
\longrightarrow\Omega_Y^1(2)
\longrightarrow0.
\]
Since $H^0(Y,\mathcal O_Y)=\mathbb C$ and
$H^1(Y,\mathcal O_Y)=0$, it yields
\[
0\longrightarrow U
\xrightarrow{d}H^0(Y,E_2)
\longrightarrow H^0(Y,\Omega_Y^1(2))
\longrightarrow0.
\]
Under the Euler identification, the differential of a quadric
$q$ corresponds to $2s(q)$. Hence the image of $d$ is exactly
$s(U)$, and therefore
\[
H^0(Y,\Omega_Y^1(2))\cong\Lambda^2W.
\]
We obtain the following formula.
\begin{proposition}\label{prop:wedge-formula}
Let $Y\subset\mathbb P^7$ be a smooth intersection of four
quadrics. With the natural equivariant structures,
\[
H^1(Y,\operatorname{End}T_Y)
\cong U^*\otimes\Lambda^2W.
\]
In particular,
\[
h^1(Y,\operatorname{End}T_Y)=4\binom82=112.
\]
\end{proposition}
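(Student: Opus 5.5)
The proof assembles the two identifications already in hand. First, I will apply Proposition~\ref{prop:normal-formula} with $N_Y=U^*\otimes\OO_Y(2)$. This is the intrinsic form of $N_Y\simeq\OO_Y(2)^{\oplus4}$: a normal vector at a point is determined by the values it assigns to the defining equations, and those values are sections of $\OO_Y(2)$. It gives an equivariant isomorphism
\[
H^1(Y,\End T_Y)\simeq U^*\otimes H^0(Y,\Omega^1_Y(2)).
\]
For this to be equivariant for a linear group $\Gamma$ preserving $Y$, we need $\Gamma$ to act on $U\subset\Sym^2W$. It does, because $\Gamma$ preserves the homogeneous ideal of $Y$, and $U$ is its degree-two part; this degree-two part is exactly $U$ by the Koszul resolution.

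Second, I will verify the computation of $H^0(Y,\Omega^1_Y(2))$ carried out just before the statement, and check that each step is natural.
\begin{enumerate}
\item The restricted Euler sequence $0\to E_2\to W\otimes\OO_Y(1)\to\OO_Y(2)\to0$ has $H^1(Y,E_2)=0$. This follows because $H^1(Y,\OO_Y(1))=0$ by \eqref{eq:ACM}, and because $H^0(Y,W\otimes\OO_Y(1))=W\otimes W\to R_2$ is surjective, $Y$ being projectively normal. Hence $H^0(Y,E_2)=\ker(W\otimes W\to R_2)$.
\item Decompose $W\otimes W=\Lambda^2W\oplus s(\Sym^2W)$. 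The map to $R_2=\Sym^2W/U$ kills $\Lambda^2W$ and restricts to the quotient map on $s(\Sym^2 W)$. Therefore the kernel is $\Lambda^2W\oplus s(U)$.
\item In the twisted conormal sequence $0\to U\otimes\OO_Y\to E_2\to\Omega^1_Y(2)\to0$, the vanishing $H^1(Y,\OO_Y)=0$ gives a short exact sequence on $H^0$.
\item The image of $U$ is $s(U)$, via Euler's formula $q=\tfrac12\sum x_i\,\partial_iq$. Concretely, $dq$ viewed in $\Omega^1_P(2)\subset W\otimes\OO(1)$ is $\sum_i x_i\otimes\partial_i q=2s(q)$. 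Dividing out gives $H^0(Y,\Omega^1_Y(2))\simeq\Lambda^2W$.
\end{enumerate}
All maps are $\Gamma$-equivariant. The Euler sequence uses the induced action on $W$, which is well defined up to the chosen linear lift; here $\Gamma$ is given as a linear group, so this lift is fixed. The conormal map is $\Gamma$-equivariant because $\Gamma$ acts on $U$.

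I expect the main obstacle to be the careful equivariant check at the Euler step. The issue is making sure the $\Gamma$-action on $\Omega^1_P(2)|_Y$ induced from $W$ agrees with the geometric action, with no character twist: the twist by $\OO(2)$ is linearized by the same lift that acts on $U\subset\Sym^2W$. I will also check that $d$ really lands in $s(U)$ and not merely in something isomorphic to it. Since both the source and the normal bundle involve $\OO_Y(2)$ with the same linearization, the characters cancel in $U^*\otimes\OO_Y(2)$. Finally, the dimension count is immediate: $\dim U^*\cdot\dim\Lambda^2W=4\binom82=4\cdot28=112$.
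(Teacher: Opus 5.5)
Your proposal is correct and follows the paper's proof essentially step by step. Like the paper, you apply Proposition~\ref{prop:normal-formula} with $N_Y\simeq U^*\otimes\OO_Y(2)$, identify $H^0(Y,\Omega^1_P(2)|_Y)$ with $\ker(W\otimes W\to R_2)=\Lambda^2W\oplus s(U)$, and then use the twisted conormal sequence together with $dq\leftrightarrow 2s(q)$ to obtain $H^0(Y,\Omega^1_Y(2))\cong\Lambda^2W$. One small point: that kernel description follows from left exactness of global sections alone, so the vanishing of $H^1(Y,E_2)$ in your first step is true but not needed.
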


Let $\widehat G=\operatorname{Hom}(G,\CC^*)$ be the character group.
For the action in \eqref{eq:sign-action}, each character
$\chi_v(g)=(-1)^{\langle g,v\rangle}$ of $G=(\ZZ/2)^3$ occurs once in
$W$, while $U$ and $U^*$ are four-dimensional trivial representations.
If $e_\chi$ and $e_\psi$ span the character summands indexed by $\chi$
and $\psi$, then $e_\chi\wedge e_\psi$ has character $\chi\psi$.
For $\chi\ne\psi$ this character is nontrivial, since
$\chi^{-1}=\chi$.

Write $\CC_\rho$ for the one-dimensional representation with character
$\rho$. A fixed nontrivial character $\rho$ occurs in $\Lambda^2W$
once for each unordered pair $\{\chi,\chi\rho\}$. There are four such
pairs, so
\[
\Lambda^2W\cong\bigoplus_{\rho\ne1}\CC_\rho^{\oplus4}.
\]
Tensoring with $U^*$ gives
\begin{equation}\label{eq:seven-characters}
H^1(Y,\operatorname{End}T_Y)
\cong U^*\otimes\Lambda^2W
\cong
\bigoplus_{\rho\neq1}\mathbb C_\rho^{\oplus16}.
\end{equation}

There is no trivial summand. Lemma~\ref{lem:descent} therefore gives
\[
H^1(X,\operatorname{End}T_X)
\cong H^1(Y,\operatorname{End}T_Y)^G=0.
\]

\subsection{Stability and the numerical invariants}
Let \(H=c_1(\mathcal O_Y(1))\) be the hyperplane class on \(Y\).
The restricted Euler sequence gives
$
c(T_{\mathbb P^7}|_Y)=(1+H)^8.
$
Since \(Y\) is a complete intersection of four quadrics, its
normal bundle is \(\mathcal O_Y(2)^{\oplus4}\). The normal
sequence therefore gives
\[
c(T_Y)
=\frac{c(T_{\mathbb P^7}|_Y)}
       {c(\mathcal O_Y(2)^{\oplus4})}
=\frac{(1+H)^8}{(1+2H)^4}
=1+4H^2-8H^3.
\]
Here terms of degree higher than \(H^3\) vanish because
\(\dim_{\mathbb C}Y=3\). In particular,
\(c_3(T_Y)=-8H^3\).

The degree of \(Y\) is the product of the degrees of its four
defining equations, so
$
\int_Y H^3=\deg Y=2^4=16.
$
Thus 
$
e(Y)=\int_Y c_3(T_Y)=-8\cdot16=-128.
$
Since \(G\) acts freely and has order \(8\), the quotient map
\(\pi:Y\to X\) is an eight-sheeted covering. Consequently,
$
e(X)=\frac{e(Y)}{8}=-16.
$

By weak Lefschetz, \(H^2(Y,\mathbb C)=\mathbb C H\).
Every element of \(G\) preserves the hyperplane class, so
the action on this cohomology group is trivial. Pullback
therefore identifies
$
H^2(X,\mathbb C)
\cong H^2(Y,\mathbb C)^G
=\mathbb C H.
$
Since this identification preserves Hodge types and \(H\)
has type \((1,1)\), we obtain \(h^{1,1}(X)=1\).
The Euler characteristic of a Calabi--Yau threefold satisfies
$
e(X)=2\bigl(h^{1,1}(X)-h^{2,1}(X)\bigr).
$
Substituting \(e(X)=-16\) and \(h^{1,1}(X)=1\), we find
$
h^{2,1}(X)=9.
$

Contraction with a nowhere-vanishing holomorphic three-form
identifies \(T_X\) with \(\Omega_X^2\), and hence
\[
h^1(X,T_X)=h^{2,1}(X)=9.
\]
The Bogomolov--Tian--Todorov theorem then gives a smooth
nine-dimensional local deformation space for \(X\). Thus the complex
structure varies, although its tangent bundle is rigid on the fixed
threefold.

We now prove that \(T_X\) is slope stable with respect to every
ample polarization. Let \(L\) be an arbitrary ample line bundle
on \(X\), and set \(A=\pi^*L\), where \(\pi:Y\to X\) is the
quotient map.

By Yau's theorem, \(Y\) admits a Ricci-flat K\"ahler metric
in the class \(c_1(A)\) \cite{Yau1978}.
Since \(Y\) is simply connected, has trivial canonical bundle,
and satisfies
$
H^1(Y,\mathcal O_Y)=H^2(Y,\mathcal O_Y)=0,
$
the Beauville--Bogomolov decomposition theorem implies that
its holonomy group is \(SU(3)\), acting irreducibly on the
tangent space \cite[Theorem~1]{Beauville1983}.

The induced Hermitian metric on \(T_Y\) is Hermite--Einstein.
Hence \(T_Y\) is polystable with respect to \(A\).
Any nontrivial decomposition into stable summands would give
a proper parallel subbundle, contradicting the irreducibility
of the holonomy representation. Thus \(T_Y\) is slope stable
with respect to \(A\)
\cite[Theorem~5.8.3]{Kobayashi1987}.

Suppose, for a contradiction, that \(T_X\) is not slope stable
with respect to \(L\). Then there is a coherent subsheaf
\(\mathcal F\subset T_X\) such that
$
0<\operatorname{rk}\mathcal F<3,
$ $
\mu_L(\mathcal F)\geq\mu_L(T_X).
$
Since \(\pi\) is finite \'etale, pullback gives an inclusion
$
\pi^*\mathcal F\hookrightarrow\pi^*T_X\cong T_Y.
$
Moreover, the projection formula and \(\deg\pi=8\) give
$
\mu_A(\pi^*\mathcal F)=8\mu_L(\mathcal F),
$ $
\mu_A(T_Y)=8\mu_L(T_X).
$
Consequently,
$
\mu_A(\pi^*\mathcal F)\geq\mu_A(T_Y),
$
contradicting the stability of \(T_Y\).
Therefore \(T_X\) is slope stable with respect to \(L\).
Since \(L\) was arbitrary, this holds for every ample
polarization on \(X\).

Together with Proposition~\ref{prop:geometry}, the vanishing
\(H^1(X,\operatorname{End}T_X)=0\) obtained from
\eqref{eq:seven-characters}, and the Hodge-number calculation
above, this completes the proof of Theorem~\ref{thm:rigid}.

Since every slope-stable vector bundle is simple
\cite[Corollary 5.7.14]{Kobayashi1987}, the stability of \(T_X\)
implies that
$
H^0(X,\operatorname{End}T_X)
=\mathbb C\,\operatorname{id}_{T_X}.
$
Since $\operatorname{Ext}_X^i(T_X,T_X)\cong H^i(X,\operatorname{End}T_X)$,
Lemma~\ref{lem:CYduality} gives the complete self-Ext table
\[
\operatorname{Ext}^i_X(T_X,T_X)=
\begin{cases}
\mathbb C,&i=0,3,\\
0,&\text{otherwise}.
\end{cases}
\]
Together with \(T_X\otimes K_X\cong T_X\), this shows that
\(T_X\) is a \(3\)-spherical bundle in the sense of
\cite[Definition~1.1(a)]{SeidelThomas2001}.

More generally, let \(H_0\subset G\) be any subgroup and write
\(Z=Y/H_0\). Since \(H_0\) acts freely, Lemma~\ref{lem:descent}
identifies
$
H^1(Z,\operatorname{End}T_Z)
\cong
H^1(Y,\operatorname{End}T_Y)^{H_0}.
$
By \eqref{eq:seven-characters}, each nontrivial character
\(\rho\in\widehat G\) occurs with multiplicity \(16\).
Its summands contribute to the \(H_0\)-invariant subspace
exactly when \(\rho|_{H_0}=1\).

The characters of \(G\) that are trivial on \(H_0\) are
precisely the pullbacks of the characters of \(G/H_0\).
There are \(8/|H_0|\) such characters, including the trivial
character of \(G\), which does not occur in
\eqref{eq:seven-characters}. Therefore
\[
h^1(Z,\operatorname{End}T_Z)
=
16\left(\frac{8}{|H_0|}-1\right).
\]
For subgroup orders \(1,2,4,8\), these dimensions are
\(112,48,16,0\), respectively.

\section{Character formulas for free complete-intersection quotients}
\label{sec:characters}

The equivariant calculation of Section~\ref{sec:rigid} extends to all
Calabi--Yau complete intersections in a single projective space.  The
result below expresses the dimension on a free quotient in terms of the
representations carried by its defining equations.
Let \(Y\subset\mathbb P^N\) be a smooth Calabi--Yau complete
intersection of dimension three, with all defining degrees
at least two. Let \(G\) be a finite group acting freely on
\(Y\) by projective transformations, and set \(X=Y/G\).

We first show that the action preserves a nowhere-vanishing
holomorphic three-form. Since
$
H^0(Y,\mathcal O_Y)\cong H^3(Y,\mathcal O_Y)\cong\mathbb C,
$ $
H^1(Y,\mathcal O_Y)=H^2(Y,\mathcal O_Y)=0,
$
the holomorphic Lefschetz number of any \(g\ne1\) is
$
L(g,\mathcal O_Y)
=
1-\operatorname{tr}\bigl(g\mid H^3(Y,\mathcal O_Y)\bigr).
$
The action is free, so \(g\) has no fixed points and this
Lefschetz number is zero. Since \(H^3(Y,\mathcal O_Y)\) is
one-dimensional, \(G\) acts trivially on it. By the naturality
of Serre duality, \(G\) also acts trivially on \(H^0(Y,K_Y)\).

Thus a nowhere-vanishing holomorphic three-form on \(Y\)
descends to \(X\), giving \(K_X\cong\mathcal O_X\).
Moreover,
$
H^i(X,\mathcal O_X)
\cong H^i(Y,\mathcal O_Y)^G=0,
$ $i=1,2.
$
As \(X\) is smooth and projective, it satisfies
Convention~\ref{conv:cy}.

To carry out the equivariant calculations, we replace the
projective action by a linear action of a finite central
extension. Let \(\Gamma\) be the inverse image of \(G\) under
$
\mathrm{SL}_{N+1}(\mathbb C)
\to
\mathrm{PGL}_{N+1}(\mathbb C).
$
Then
\[
1\longrightarrow\mu_{N+1}
\longrightarrow\Gamma
\longrightarrow G
\longrightarrow1,
\]
where
$
\mu_{N+1}=\{\zeta I:\zeta^{N+1}=1\}
$
is the scalar kernel. In particular, \(\Gamma\) is finite.
Its linear action gives a natural linearization of
\(\mathcal O_Y(1)\), even when this line bundle does not
admit a \(G\)-linearization. This use of finite central
extensions is standard; see \cite[\S3]{Hua2011} and
\cite[\S3.1 and \S4.1]{Braun2011}.

We therefore perform the subsequent equivariant calculations
using \(\Gamma\). The scalar kernel may act nontrivially on
intermediate spaces involving \(\mathcal O_Y(1)\). However,
it acts trivially on \(Y\), hence also on \(T_Y\),
\(\operatorname{End}T_Y\), and their cohomology.
The resulting representations on
\(H^i(Y,\operatorname{End}T_Y)\) therefore descend to \(G\).

Let \(S=\mathbb C[x_0,\ldots,x_N]\), let \(I\subset S\) be
the homogeneous ideal of \(Y\), and write
\(S_+=\bigoplus_{k>0}S_k\).
For each degree \(d\), define
$
U_d=(I/S_+I)_d,$ $m_d=\dim_{\mathbb C}U_d.$
Thus \(U_d\) records the degree-\(d\) defining equations
modulo those obtained by multiplying lower-degree equations
by homogeneous polynomials. Its dimension \(m_d\) is the
number of degree-\(d\) generators in a minimal homogeneous
generating set for \(I\).

The action of \(\Gamma\) preserves both \(I_d\) and
\((S_+I)_d\), so \(U_d\) is naturally a
\(\Gamma\)-representation. Since \(\Gamma\) is finite and
we work over \(\mathbb C\), we may choose a
\(\Gamma\)-equivariant splitting of
$
0\to(S_+I)_d
\to I_d
\to U_d
\to 0.
$
These splittings realize the spaces \(U_d\) as
\(\Gamma\)-invariant spaces of homogeneous polynomials
whose bases together minimally generate \(I\).

With these choices, the defining equations induce a
\(\Gamma\)-equivariant isomorphism
$
N_Y^*
=\mathcal I_Y/\mathcal I_Y^2
\cong
\bigoplus_d\mathcal O_Y(-d)\otimes U_d,
$
because \(Y\) is a complete intersection.
Taking duals gives the corresponding description of the
normal bundle:
\[
N_Y\cong
\bigoplus_d\mathcal O_Y(d)\otimes U_d^*.
\]
Write $R_k=H^0(Y,\OO_Y(k))$ for $k\ge0$ and $R_k=0$ for
$k<0$, and put $W=R_1$ and $b=h^1(Y,\End T_Y)$.

\begin{proposition}\label{prop:character-formula}
With the preceding notation,
\begin{equation}\label{eq:general-quotient-count}
h^1(X,\End T_X)=
\frac{b+\sum_dm_d^2}{|G|}-\sum_d\dim(\End U_d)^\Gamma.
\end{equation}
More precisely, for $1\ne g\in G$ and any lift $\gamma\in\Gamma$,
\begin{equation}\label{eq:trace-End-negative}
\operatorname{tr}(g\mid H^1(Y,\End T_Y))
=-\sum_d\left|\operatorname{tr}(\gamma\mid U_d)\right|^2.
\end{equation}
\end{proposition}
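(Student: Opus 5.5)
The plan is to compute the $\Gamma$-character of $H^1(Y,\End T_Y)$ through Proposition~\ref{prop:normal-formula}, reduce it to characters of the representations $W$, $R_k$ and $U_d$, and then exploit the fact that a nonidentity element acts freely on $Y$ to cancel almost everything. Once \eqref{eq:trace-End-negative} is established, \eqref{eq:general-quotient-count} follows by averaging. Lemma~\ref{lem:descent} gives $h^1(X,\End T_X)=\frac1{|G|}\sum_{g\in G}\operatorname{tr}(g\mid H^1(Y,\End T_Y))$. The identity contributes $b$, and each $g\ne1$ contributes $-\sum_d|\operatorname{tr}(\gamma\mid U_d)|^2$. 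The term $|\operatorname{tr}(\gamma\mid U_d)|^2=\operatorname{tr}(\gamma\mid\End U_d)$ is independent of the lift, since scalars cancel in $\End U_d$. Adding and subtracting the identity term $m_d^2$ turns $\frac1{|G|}\sum_{g}\operatorname{tr}(g\mid\End U_d)$ into $\dim(\End U_d)^\Gamma$. This yields the formula.

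For the trace, I would use the $\Gamma$-equivariant isomorphism $H^1(Y,\End T_Y)\simeq H^0(Y,\Omega^1_Y\otimes N_Y)=\bigoplus_d U_d^*\otimes H^0(Y,\Omega_Y^1(d))$. Each $H^0(Y,\Omega^1_Y(d))$ comes from the twisted Euler and conormal sequences, exactly as in the four-quadric case. By \eqref{eq:ACM} and the vanishing $H^1(Y,\Omega^1_Y(k))=0$ for the relevant $k\ge1$, we get the virtual identity
\[
[H^0(\Omega^1_Y(d))]=[W\otimes R_{d-1}]-[R_d]-\textstyle\sum_{e}[U_e\otimes R_{d-e}].
\]
If the $H^1$ vanishing fails for some twist, I would instead use the holomorphic Lefschetz formula directly, as below.

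The cleaner route is the Atiyah--Bott holomorphic Lefschetz theorem, because it avoids computing every $R_k$. For $g\ne1$ acting freely, $\sum_i(-1)^i\operatorname{tr}(g\mid H^i(Y,\mathcal E))=0$ for every equivariant bundle $\mathcal E$. I would apply this to the exact sequence $0\to\End T_Y\to F\to\Omega_Y^1\otimes N_Y\to0$ and to the Euler/conormal pieces. For $\End T_Y$, the alternating trace is $\operatorname{tr}H^0-\operatorname{tr}H^1+\operatorname{tr}H^2-\operatorname{tr}H^3$. Here $H^0$ and $H^3$ are trivial one-dimensional; this needs simplicity of $T_Y$, which holds by the holonomy argument, and Serre duality for $H^3$. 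The group $H^2$ is dual to $H^1$, so its trace is the conjugate. Hence $2\operatorname{Re}\operatorname{tr}(g\mid H^1)=2$ only, which is not enough.

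So I would instead compute $\operatorname{tr}(g\mid H^1(\End T_Y))$ as $\operatorname{tr}H^0(\Omega^1\otimes N)$ and evaluate that by Lefschetz for $\Omega^1_Y(d)\otimes U_d^*$. The sequence $0\to U_d^*\otimes\bigoplus_eU_e\otimes\OO(d-e)\to U_d^*\otimes\Omega_P^1(d)|_Y\to U_d^*\otimes\Omega^1_Y(d)\to0$ has middle and right terms whose higher cohomology traces vanish in sum by Lefschetz. What survives is $-\sum_e\operatorname{tr}(U_d^*\otimes U_e\mid H^\bullet(\OO_Y(d-e)))$. For $e=d$ this is $-\overline{\operatorname{tr}(\gamma\mid U_d)}\operatorname{tr}(\gamma\mid U_d)$. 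Pairs $e\ne d$ cancel against each other through Serre duality of $H^0(\OO(k))$ and $H^3(\OO(-k))$, or vanish by Lefschetz.

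The main obstacle is this bookkeeping: verifying that only the $e=d$ terms survive. One must check carefully that the higher cohomology of the twisted pieces either vanishes or cancels in the Lefschetz sums, and keep track of the scalar kernel $\mu_{N+1}$ acting on intermediate terms. The first thing I would check is the total alternating-trace identity for $\Omega^1_Y\otimes N_Y$ as a whole, together with the vanishing of its $H^i$ for $i\ge1$.
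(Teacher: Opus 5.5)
You have the right skeleton. Your averaging step and the virtual character identity in your second paragraph are the ones the paper uses. The gap is that you never evaluate the trace: you defer it as ``the main obstacle,'' and the mechanisms you sketch for it do not work.

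The holomorphic Lefschetz formula gives the trace on $H^0$ only for a bundle whose higher cohomology you control, and the bundles you apply it to have higher cohomology.

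For $\End T_Y$, write $t=\operatorname{tr}(g\mid H^1(Y,\End T_Y))$. The formula gives $1-t+\bar t-1=0$, which says only that $t$ is real. Your claim $2\operatorname{Re}t=2$ is wrong; for the sign action on four quadrics, $t=-16$.

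More seriously, the check you plan to run first is false. For $d\ge2$, both $H^2$ and $H^3$ of $\Omega^1_{\PP^N}(d)|_Y$ vanish. The twisted conormal sequence then gives
\[
H^2\bigl(Y,\Omega^1_Y(d)\bigr)\cong\bigoplus_{e\ge d}U_e\otimes H^3\bigl(Y,\OO_Y(d-e)\bigr)\supseteq U_d\otimes H^3(Y,\OO_Y)\neq0,
\]
so $H^2(Y,\Omega^1_Y\otimes N_Y)\ne0$. If this higher cohomology did vanish, Lefschetz would force $t=0$ for every $g\ne1$. That contradicts \eqref{eq:trace-End-negative} whenever some $\operatorname{tr}(\gamma\mid U_d)\ne0$. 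In a Lefschetz computation, the term $-|\operatorname{tr}(\gamma\mid U_d)|^2$ comes exactly from this $H^2$.

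The same problem affects your fourth paragraph. The alternating trace of every term in the conormal sequence is zero; for $e=d$ it is $1-\operatorname{tr}(\gamma\mid H^3(\OO_Y))=0$. So nothing ``survives.'' Your value for the $e=d$ term silently uses only $H^0(\OO_Y)$. No Serre-duality cancellation between the $e\ne d$ terms is involved.

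The missing step uses only ingredients you already have.
\begin{enumerate}
\item Apply Lefschetz only to $\OO_Y(k)$ with $k\ge1$. Its higher cohomology vanishes by Kodaira, so $\operatorname{tr}(\gamma\mid R_k)=0$.
\item Take traces in the identity from your second paragraph. That identity holds at the level of $H^0$ because $W\otimes R_{d-1}\to R_d$ is surjective and $H^1(Y,\OO_Y(d-e))=0$ by \eqref{eq:ACM}. It does not rely on any vanishing of $H^1(Y,\Omega^1_Y(k))$.
\item Since $d\ge2$, the terms $W\otimes R_{d-1}$ and $R_d$ have zero trace. The terms $U_e\otimes R_{d-e}$ with $e<d$ also have zero trace, and those with $e>d$ vanish because $R_{d-e}=0$.
\item The $e=d$ term, with $R_0=\CC$ trivial, gives $-\operatorname{tr}(\gamma\mid U_d^*)\operatorname{tr}(\gamma\mid U_d)=-|\operatorname{tr}(\gamma\mid U_d)|^2$.
\end{enumerate}
This is exactly the paper's proof.
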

\begin{proof}
Write
$
V=H^1(Y,\operatorname{End}T_Y),
$ $
E_d=\Omega^1_{\mathbb P^N}(d)|_Y,
$
Proposition~\ref{prop:normal-formula} gives a
\(\Gamma\)-equivariant isomorphism
\[
V\cong
\bigoplus_d U_d^*\otimes H^0(Y,\Omega_Y^1(d)).
\]

The restricted Euler sequence, twisted by
\(\mathcal O_Y(d)\), is
\[
0\longrightarrow E_d
\longrightarrow W\otimes\mathcal O_Y(d-1)
\longrightarrow\mathcal O_Y(d)
\longrightarrow0.
\]
Since multiplication \(W\otimes R_{d-1}\to R_d\) is
surjective, its global sections give an exact sequence
\[
0\longrightarrow H^0(Y,E_d)
\longrightarrow W\otimes R_{d-1}
\longrightarrow R_d
\longrightarrow0.
\]
The twisted conormal sequence is
\[
0\longrightarrow
\bigoplus_e U_e\otimes\mathcal O_Y(d-e)
\longrightarrow E_d
\longrightarrow\Omega_Y^1(d)
\longrightarrow0.
\]
By \eqref{eq:ACM}, we have
\(H^1(Y,\mathcal O_Y(d-e))=0\). Thus this sequence gives
\[
0\longrightarrow
\bigoplus_e U_e\otimes R_{d-e}
\longrightarrow H^0(Y,E_d)
\longrightarrow H^0(Y,\Omega_Y^1(d))
\longrightarrow0.
\]
Combining these two exact sequences yields
\begin{equation}\label{eq:general-character-ring}
[V]
=
\sum_d[U_d^*]
\left(
[W\otimes R_{d-1}]-[R_d]
-\sum_e[U_e\otimes R_{d-e}]
\right)
\end{equation}
in the representation ring of \(\Gamma\).

Now let \(1\ne g\in G\), and choose a lift
\(\gamma\in\Gamma\). For \(k>0\), Kodaira vanishing and
\(K_Y\cong\mathcal O_Y\) give
$
H^i(Y,\mathcal O_Y(k))=0
$ $ (i>0).$
Since \(g\) has no fixed points, the holomorphic Lefschetz
formula therefore gives
\begin{equation}\label{eq:free-positive-trace}
0=
\sum_i(-1)^i
\operatorname{tr}\bigl(
\gamma\mid H^i(Y,\mathcal O_Y(k))
\bigr)
=
\operatorname{tr}(\gamma\mid R_k).
\end{equation}
Here we use the fixed-point-free case recalled in
\cite[Theorem~2.1]{Hua2011}.

Since every defining degree satisfies \(d\ge2\), the
terms involving \(R_{d-1}\) and \(R_d\) contribute zero
to the trace in \eqref{eq:general-character-ring}.
Among the terms involving \(R_{d-e}\), those with \(d>e\)
have zero trace by \eqref{eq:free-positive-trace}, and
those with \(d<e\) vanish because \(R_{d-e}=0\).
Thus only \(d=e\) remains. Since \(R_0=\mathbb C\) is
the trivial representation, we obtain
$$
\operatorname{tr}(g\mid V)
=
-\sum_d
\operatorname{tr}(\gamma\mid U_d^*)
\operatorname{tr}(\gamma\mid U_d).
$$
Finite-group representations over \(\mathbb C\) admit
invariant Hermitian inner products, so
$
\operatorname{tr}(\gamma\mid U_d^*)
=
\overline{\operatorname{tr}(\gamma\mid U_d)}.
$
Consequently,
\[
\operatorname{tr}(g\mid V)
=
-\sum_d
\left|\operatorname{tr}(\gamma\mid U_d)\right|^2,
\qquad g\ne1,
\]
which proves \eqref{eq:trace-End-negative}.

The scalar kernel of \(\Gamma\to G\) acts on each
\(U_d\) by scalars of absolute value one. Hence the
absolute value squared in this formula is independent
of the chosen lift. Moreover, the conjugation action
on \(\operatorname{End}U_d\) descends to \(G\), with
character
$
g\longmapsto
\left|\operatorname{tr}(\gamma_g\mid U_d)\right|^2.
$
Averaging this character gives
\[
\frac1{|G|}\sum_{g\in G}
\left|\operatorname{tr}(\gamma_g\mid U_d)\right|^2
=
a_d,
\qquad
a_d:=\dim(\operatorname{End}U_d)^\Gamma.
\]

Finally, let \(D=\dim V\) and choose \(\gamma_1=1\).
By Lemma~\ref{lem:descent} and the character averaging
formula,
\[
\begin{aligned}
h^1(X,\operatorname{End}T_X)
&=\dim V^G=\frac1{|G|}
\left(
D-\sum_d\sum_{g\ne1}
\left|\operatorname{tr}(\gamma_g\mid U_d)\right|^2
\right)\\
&=\frac1{|G|}
\left(D-\sum_d(|G|a_d-m_d^2)\right)\\
&=\frac{D+\sum_d m_d^2}{|G|}-\sum_d a_d.
\end{aligned}
\]
This is \eqref{eq:general-quotient-count}.
\end{proof}

\subsection{The five complete-intersection types}

We first list the possible multidegrees of $Y$.
Since $Y\subset\PP^N$ is a three-dimensional complete intersection
defined by $r$ equations, we have $r=N-3$.
The Calabi--Yau condition, together with adjunction, gives
$\sum_j d_j=N+1$. Hence
$
\sum_{j=1}^r(d_j-1)
=(N+1)-(N-3)=4.
$
Each $d_j-1$ is a positive integer, since $d_j\geq2$.
The five partitions
\[
4,\qquad 3+1,\qquad 2+2,\qquad
2+1+1,\qquad 1+1+1+1
\]
therefore give the five types
\[
(5),\qquad (2,4),\qquad (3,3),\qquad
(2,2,3),\qquad (2,2,2,2).
\]
Their ambient spaces are determined by $N=r+3$.

To compute the cohomology dimensions, write
\[
a_k=\dim R_k=h^0(Y,\OO_Y(k)),
\quad
m_d=\#\{j:d_j=d\}=\dim U_d,
\]
with the conventions $a_0=1$ and $a_k=0$ for $k<0$.
The numbers $a_k$ are the coefficients of the Hilbert series
\begin{equation}\label{eq:CICY-Hilbert}
\sum_{k\geq0}a_k t^k
=\frac{\prod_{j=1}^r(1-t^{d_j})}{(1-t)^{N+1}}.
\end{equation}
Indeed, the polynomial ring in $N+1$ variables has Hilbert
series $(1-t)^{-(N+1)}$. Since the defining equations form a
regular sequence, imposing an equation of degree $d_j$
multiplies the Hilbert series by $1-t^{d_j}$.

Taking dimensions in the Euler and conormal calculations
used in \eqref{eq:general-character-ring} gives
\begin{equation}\label{eq:CICY-cotangent-dimension}
h^0(Y,\Omega_Y^1(d))
=(N+1)a_{d-1}-a_d-\sum_e m_ea_{d-e}.
\end{equation}
Proposition~\ref{prop:normal-formula} then gives
\begin{equation}\label{eq:CICY-End-dimension}
b:=h^1(Y,\operatorname{End}T_Y)
=\sum_d m_d\,h^0(Y,\Omega_Y^1(d)).
\end{equation}
Table~\ref{tab:CICY} records the dimensions, together with the
numerical terms needed in \eqref{eq:general-quotient-count}.
The remaining term in that formula depends on the representations
carried by the defining equations.

\begin{table}[htbp]
\centering
\caption{Tangent-bundle cohomology for the five
Calabi--Yau complete-intersection types in ordinary
projective space.}
\label{tab:CICY}
\begin{tabular}{lcrrr}
\toprule
Type & Ambient space
& $b=h^1(Y,\operatorname{End}T_Y)$
& $\sum_d m_d^2$
& $b+\sum_d m_d^2$\\
\midrule
$(5)$       & $\PP^4$ & 224 & 1  & 225\\
$(2,4)$     & $\PP^5$ & 190 & 2  & 192\\
$(3,3)$     & $\PP^5$ & 140 & 4  & 144\\
$(2,2,3)$   & $\PP^6$ & 140 & 5  & 145\\
$(2,2,2,2)$ & $\PP^7$ & 112 & 16 & 128\\
\bottomrule
\end{tabular}
\end{table}

Here are the Hilbert coefficients and substitutions used in the table.

\emph{Type $(5)$ in $\PP^4$.}
There is one defining equation of degree five, so $m_5=1$.
The Hilbert series gives
\[
a_4=\binom84=70,
\qquad
a_5=\binom94-1=125.
\]
Hence
\[
h^0(Y,\Omega_Y^1(5))
=5\cdot70-125-1=224,
\qquad
b=224.
\]

\emph{Type $(2,4)$ in $\PP^5$.}
Here $m_2=m_4=1$, and
\[
a_1=6,\quad
a_2=21-1=20,\quad
a_3=56-6=50,\quad
a_4=126-21-1=104.
\]
Formula~\eqref{eq:CICY-cotangent-dimension} gives
\[
\begin{aligned}
h^0(Y,\Omega_Y^1(2))
&=6\cdot6-20-1=15,\\
h^0(Y,\Omega_Y^1(4))
&=6\cdot50-104-20-1=175.
\end{aligned}
\]
Since each degree occurs once,
\[
b=15+175=190.
\]

\emph{Type $(3,3)$ in $\PP^5$.}
There are two cubic equations, so $m_3=2$.
We have
\[
a_2=\binom75=21,
\qquad
a_3=\binom85-2=54.
\]
Thus
\[
h^0(Y,\Omega_Y^1(3))
=6\cdot21-54-2=70.
\]
This contribution occurs twice, giving
\[
b=2\cdot70=140.
\]

\emph{Type $(2,2,3)$ in $\PP^6$.}
Here $m_2=2$ and $m_3=1$. The required Hilbert coefficients are
\[
a_1=7,\qquad
a_2=28-2=26,\qquad
a_3=84-2\cdot7-1=69.
\]
Consequently,
\[
\begin{aligned}
h^0(Y,\Omega_Y^1(2))
&=7\cdot7-26-2=21,\\
h^0(Y,\Omega_Y^1(3))
&=7\cdot26-69-2\cdot7-1=98.
\end{aligned}
\]
The quadratic contribution occurs twice and the cubic
contribution once, so
\[
b=2\cdot21+98=140.
\]

\emph{Type $(2,2,2,2)$ in $\PP^7$.}
There are four quadratic equations, so $m_2=4$.
Since
\[
a_1=8,\qquad
a_2=\binom97-4=32,
\]
we obtain
\[
h^0(Y,\Omega_Y^1(2))
=8\cdot8-32-4=28.
\]
Therefore
\[
b=4\cdot28=112.
\]

The value $224$ for the quintic and $140$ for the intersection of two
cubics already appear in \cite[equation~(2.13) and \S3]{DGKM1989}.
Their \S3 also explains how the exact-sequence method extends to the
other complete intersections in a single projective space, although
the remaining three values in Table~\ref{tab:CICY} are not listed there.
The numerical dimensions are included here to supply the data for
the quotient formula. For further early calculations and deformation
results, see \cite{EastwoodHubsch1990,Huybrechts1995}.

\subsection{Four quadrics and involutions}

We use the Lefschetz character calculation for the spaces of coordinates
and equations, as in \cite[\S\S2--3]{Hua2011}, to compute the character
on $H^1(Y,\End T_Y)$.
For an intersection of four quadrics, the dimension of the
tangent bundle deformation space of a free quotient is determined
by two numerical invariants of the acting group: its order and
the number of its elements of order two.
We call a nonidentity element $g$ with $g^2=1$ an involution.

\begin{corollary}\label{cor:involutions}
Suppose that a finite group $G$ acts freely by projective
transformations on a smooth intersection
$Y\subset\PP^7$ of four quadrics, and set $X=Y/G$.
Let
$
t(G)=\#\{g\in G:g\ne1, g^2=1\}
$
be the number of involutions in $G$. Then
\begin{equation}\label{eq:involution-formula}
h^1(X,\End T_X)
=\frac{16(7-t(G))}{|G|}.
\end{equation}
In particular $t(G)\leq7$ and $T_X$ is infinitesimally
rigid if and only if $t(G)=7$.
\end{corollary}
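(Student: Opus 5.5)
The plan is to specialize the trace formula \eqref{eq:trace-End-negative} of Proposition~\ref{prop:character-formula} to the case where the only defining degree is $d=2$, with $U=U_2$ of dimension $m_2=4$, and $b=112$ from Proposition~\ref{prop:wedge-formula}. For $1\ne g\in G$ with lift $\gamma\in\Gamma$, that formula gives $\operatorname{tr}(g\mid H^1(Y,\End T_Y))=-|\operatorname{tr}(\gamma\mid U)|^2$. So everything reduces to computing $|\operatorname{tr}(\gamma\mid U)|^2$ for each nonidentity $g$. I expect the answer to be $16$ when $g$ is an involution and $0$ otherwise.

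To compute $\operatorname{tr}(\gamma\mid U)$, I use the vanishing \eqref{eq:free-positive-trace}, namely $\operatorname{tr}(\gamma\mid R_k)=0$ for $k>0$ and $g\ne1$. I apply it with $k=1$ and $k=2$. Since $R_1=W$, we get $\operatorname{tr}(\gamma\mid W)=0$. For $k=2$ I need the equivariant exact sequence $0\to U\to\Sym^2W\to R_2\to0$ of $\Gamma$-representations. It holds because the defining quadrics form a regular sequence with no linear relations, so the degree-two part of the ideal is exactly $U$; the identification $R_2=\Sym^2W/U$ is already used in Section~\ref{sec:rigid}. Hence $\operatorname{tr}(\gamma\mid U)=\operatorname{tr}(\gamma\mid\Sym^2W)$. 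The standard identity $\operatorname{tr}(\gamma\mid\Sym^2W)=\tfrac12\bigl(\operatorname{tr}(\gamma\mid W)^2+\operatorname{tr}(\gamma^2\mid W)\bigr)$ then gives $\operatorname{tr}(\gamma\mid U)=\tfrac12\operatorname{tr}(\gamma^2\mid W)$.

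Now I split into two cases. If $g^2\ne1$, then $\gamma^2$ is a lift of the nonidentity element $g^2$, so $\operatorname{tr}(\gamma^2\mid W)=0$ by the $k=1$ case applied to $g^2$, and $\operatorname{tr}(\gamma\mid U)=0$. If $g$ is an involution, then $\gamma^2$ lies in the scalar kernel, so $\gamma^2=\zeta I$ with $\zeta^8=1$. Then $\operatorname{tr}(\gamma^2\mid W)=8\zeta$ and $\operatorname{tr}(\gamma\mid U)=4\zeta$, whose squared absolute value is $16$. Averaging the character over $G$ as in Lemma~\ref{lem:descent} gives
\[
h^1(X,\End T_X)=\frac{1}{|G|}\Bigl(112-16\,t(G)\Bigr)=\frac{16(7-t(G))}{|G|}.
\]
Nonnegativity of the dimension forces $t(G)\le7$, and rigidity holds exactly when $t(G)=7$.

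The main point requiring care is the $\Gamma$-equivariance of the identification $R_2\cong\Sym^2W/U$, so that traces can be subtracted in the representation ring. Once that is in place, the trace computation is routine. As a consistency check, $G=(\ZZ/2)^3$ has $t=7$, and for its subgroups the formula reproduces the dimensions $112,48,16,0$ found in Section~\ref{sec:rigid}.
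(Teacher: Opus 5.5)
Your proof is correct and is essentially the paper's argument: the same trace vanishing on $W$ and $R_2$, the sequence $0\to U\to\Sym^2W\to R_2\to0$, the symmetric-square trace identity, and the case split according to whether $g^2=1$. The only difference is cosmetic: you quote \eqref{eq:trace-End-negative} to get $\operatorname{tr}(g\mid H^1(Y,\End T_Y))=-|\operatorname{tr}(\gamma\mid U)|^2$ directly, whereas the paper starts from $H^1(Y,\End T_Y)\simeq U^*\otimes\Lambda^2W$ and computes the traces on $U^*$ and $\Lambda^2W$ separately.
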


\begin{proof}
Put
$
V=H^1(Y,\End T_Y).
$
By Proposition~\ref{prop:wedge-formula},
$$
V\simeq U^*\otimes\Lambda^2W,
$$
where $U$ is the four-dimensional space of defining quadrics
and $\dim W=8$. In particular,
$
\dim V=4\binom82=112.
$
By Lemma~\ref{lem:descent}, the desired dimension is
$\dim V^G$. We compute it by averaging the traces of the
elements of $G$ on $V$.

Let $g\ne1$, and choose a lift $\gamma\in\Gamma$.
The trace-vanishing statement
\eqref{eq:free-positive-trace}, applied in degrees one and
two, gives
$
\operatorname{tr}(\gamma\mid W)
=\operatorname{tr}(\gamma\mid R_2)=0.
$
The defining quadrics are the kernel of the restriction map
on quadratic polynomials, so there is an exact sequence
$
0\to U
\to\Sym^2W
\to R_2
\to 0.
$
Consequently,
\[
\operatorname{tr}(\gamma\mid U)
=\operatorname{tr}(\gamma\mid\Sym^2W).
\]

The standard trace identities for the symmetric and exterior
squares are
$
\operatorname{tr}(\gamma\mid\Sym^2W)
=\frac12(
\operatorname{tr}(\gamma\mid W)^2
+\operatorname{tr}(\gamma^2\mid W)
),$ $
\operatorname{tr}(\gamma\mid\Lambda^2W)
=\frac12(
\operatorname{tr}(\gamma\mid W)^2
-\operatorname{tr}(\gamma^2\mid W)
).
$
These follow by writing the eigenvalues of $\gamma$ on $W$
as $\lambda_1,\ldots,\lambda_8$ and summing
$\lambda_i\lambda_j$ over $i\leq j$ and $i<j$, respectively.
Since $\operatorname{tr}(\gamma\mid W)=0$, we obtain
\[
\operatorname{tr}(\gamma\mid U)
=\frac12\operatorname{tr}(\gamma^2\mid W),\quad
\operatorname{tr}(\gamma\mid\Lambda^2W)
=-\frac12\operatorname{tr}(\gamma^2\mid W).
\]

If $g^2\ne1$, then $\gamma^2$ is a lift of the nonidentity
element $g^2$. Applying
\eqref{eq:free-positive-trace} once more gives
$
\operatorname{tr}(\gamma^2\mid W)=0.
$
It follows that
\[
\operatorname{tr}(g\mid V)=0.
\]

If $g$ is an involution, then $\gamma^2$ lies in the scalar
kernel of $\Gamma\to G$. Thus it acts on $W$ as
$c\,\operatorname{id}_W$ for a root of unity $c$, and
\[
\operatorname{tr}(\gamma\mid U)=4c,
\qquad
\operatorname{tr}(\gamma\mid\Lambda^2W)=-4c.
\]
Because $\Gamma$ is finite, the character of the dual
representation is the complex conjugate character. Hence
\[
\operatorname{tr}(\gamma\mid U^*)
=4\overline c=4c^{-1}.
\]
Using $V\simeq U^*\otimes\Lambda^2W$, we conclude that
\[
\operatorname{tr}(g\mid V)
=(4c^{-1})(-4c)=-16.
\]
In particular, this value is independent of the chosen lift.

We have therefore shown that
\[
\operatorname{tr}(g\mid V)=
\begin{cases}
112,&g=1,\\
-16,&g\ne1\text{ and }g^2=1,\\
0,&g^2\ne1.
\end{cases}
\]
Averaging these traces gives
\[
\begin{aligned}
h^1(X,\End T_X)
&=\dim V^G=\frac1{|G|}
  \sum_{g\in G}\operatorname{tr}(g\mid V)\\
&=\frac{112-16t(G)}{|G|}
 =\frac{16(7-t(G))}{|G|}.
\end{aligned}
\]
Since this dimension is nonnegative, $t(G)\leq7$.
It vanishes exactly when $t(G)=7$.
\end{proof}

For $G=(\ZZ/2)^3$, all seven nonidentity elements are
involutions. Thus $t(G)=7$, and the formula recovers the
infinitesimal rigidity of the tangent bundle in
Section~\ref{sec:rigid}.

The quaternion group
$
Q_8=\{\pm1,\pm i,\pm j,\pm k\}
$
has exactly one involution, namely $-1$; the other six
nonidentity elements have order four.
Consequently, a free $Q_8$-action on such a threefold gives
\[
h^1(Y/Q_8,\End T_{Y/Q_8})
=\frac{16(7-1)}8=12.
\]
Beauville constructed such an action in
\cite[Theorem~1.1]{Beauville1999}; see also
\cite[Section~2]{Hua2011}.

For comparison, let $Y\subset\PP^4$ be a smooth quintic
threefold with a free action of a finite group $G$ by
projective transformations, and set $X=Y/G$.
Here $h^1(Y,\End T_Y)=224$, and the equation space $U_5$
is one-dimensional. Therefore
$\End U_5$ is the trivial one-dimensional representation,
and \eqref{eq:general-quotient-count} becomes
\[
h^1(X,\End T_X)=\frac{225}{|G|}-1.
\]
For free actions of groups of orders five and twenty-five,
the resulting dimensions are $44$ and $8$, respectively.

\section{An obstructed tangent bundle and its complete Kuranishi germ}
\label{sec:igusa-obstructions}

We prove Theorem~\ref{thm:igusa-kuranishi} on the classical Igusa quotient.
The finite flat cover provides harmonic representatives for all
endomorphism-valued Dolbeault classes.  Their products give both a
nonzero primary obstruction and the exact analytic Kuranishi equations.
Throughout this section, the complex structure of the threefold is fixed.

\subsection{The quotient and its linear holonomy}

Let $A=E_1\times E_2\times E_3$ be a product of elliptic curves,
and choose nonzero two-torsion points $\tau_i\in E_i[2]$.
All coordinate operations use the elliptic-curve group laws. For
example, one may take $E_i=\CC/(\ZZ+i\ZZ)$ and $\tau_i=\tfrac12$.

Define automorphisms of $A$ by
\begin{align}
\sigma(z_1,z_2,z_3)
&=(z_1+\tau_1,-z_2,-z_3),
\label{eq:igusa-sigma}\\
\tau(z_1,z_2,z_3)
&=(-z_1,z_2+\tau_2,-z_3+\tau_3).
\label{eq:igusa-tau}
\end{align}
This is Igusa's construction
\cite[p.~678]{Igusa1954}, in the form described in
\cite[Example~2.17]{OguisoSakurai2001}.
We verify the properties needed below.

The two-torsion relations imply
$
\sigma^2=\tau^2=1
$
and
\[
\sigma\tau(z_1,z_2,z_3)
=\tau\sigma(z_1,z_2,z_3)
=(-z_1+\tau_1,-z_2+\tau_2,z_3+\tau_3).
\]
These formulas give four distinct automorphisms, so
\[
G=\langle\sigma,\tau\rangle
=\{1,\sigma,\tau,\sigma\tau\}
\simeq(\mathbb Z/2)^2.
\]

The elements $\sigma$, $\tau$, and $\sigma\tau$ act by nonzero
translations on the first, second, and third coordinates, respectively.
They therefore have no fixed points, and
\[
p:A\longrightarrow Z:=A/G
\]
is a finite \'etale cover of degree four.
In particular, $Z$ is a smooth projective threefold.

The translation terms disappear upon differentiation:
\[
D\sigma=\operatorname{diag}(1,-1,-1),
\qquad
D\tau=\operatorname{diag}(-1,1,-1).
\]
Let $e_i=\partial/\partial z_i$ denote the three constant
tangent directions on $A$.
The action on the line spanned by $e_i$ is given by a
character $\chi_i:G\to\{\pm1\}$.
Listing its values on the generators $(\sigma,\tau)$,
we obtain
\begin{equation}\label{eq:igusa-characters}
\chi_1=(+,-),\qquad
\chi_2=(-,+),\qquad
\chi_3=(-,-).
\end{equation}
These are the three nontrivial characters of $G$.

A product of translation-invariant flat K\"ahler metrics
on the elliptic curves is preserved by $G$, and hence
descends to a flat K\"ahler metric on $Z$.
Its linear holonomy group is the group of four diagonal
matrices generated by $D\sigma$ and $D\tau$.
The three characters above describe its action on
the three tangent directions.

Since
$
\chi_1\chi_2\chi_3=1,
$
the holomorphic three-form
$
\omega=dz_1\wedge dz_2\wedge dz_3
$
is $G$-invariant. It therefore descends to a holomorphic
three-form on $Z$. This descended form is nowhere
vanishing because $p$ is \'etale, so
$
K_Z\simeq\mathcal O_Z.
$

To compute the low-degree cohomology of $\mathcal O_Z$,
put $\eta_i=d\overline z_i$.
On the product torus $A$, Dolbeault cohomology gives
\[
H^1(A,\mathcal O_A)
=\bigoplus_{i=1}^3\mathbb C[\eta_i],
\qquad
H^2(A,\mathcal O_A)
=\bigoplus_{i<j}\mathbb C[\eta_i\wedge\eta_j].
\]
The class $[\eta_i]$ has character $\chi_i$, since the
signs in \eqref{eq:igusa-characters} are real.
The class $[\eta_i\wedge\eta_j]$ has character
$\chi_i\chi_j$. Moreover,
$
\chi_1\chi_2=\chi_3,$ $
\chi_1\chi_3=\chi_2,$ $
\chi_2\chi_3=\chi_1.
$
Thus neither cohomology group contains a trivial
representation.
Taking invariants under the finite \'etale cover gives
\[
H^1(Z,\mathcal O_Z)
=H^1(A,\mathcal O_A)^G=0,
\qquad
H^2(Z,\mathcal O_Z)
=H^2(A,\mathcal O_A)^G=0.
\]

The decomposition
$
T_A=\mathcal O_Ae_1\oplus
    \mathcal O_Ae_2\oplus
    \mathcal O_Ae_3
$
is preserved by $G$.
Each summand therefore descends to a line bundle $L_i$
on $Z$, giving
\begin{equation}\label{eq:igusa-tangent-splitting}
T_Z=L_1\oplus L_2\oplus L_3.
\end{equation}
The transition functions of $L_i$ can be chosen among
the values of $\chi_i$, namely $\pm1$.
Consequently, each $L_i$ is unitary flat and satisfies
$
L_i^{\otimes2}\simeq\mathcal O_Z.
$

The $L_i$ are pairwise nonisomorphic. A homomorphism $L_i\to L_j$
pulls back to a constant $a$ on $A$, and equivariance requires
$\chi_j(g)a=a\chi_i(g)$ for every $g\in G$. Since the characters are
distinct,
\[
H^0(Z,\operatorname{Hom}(L_i,L_j))=0\qquad(i\ne j).
\]

For any ample polarization $H$, the relation
$L_i^{\otimes2}\simeq\mathcal O_Z$ implies
$
\mu_H(L_i)=c_1(L_i)\cdot H^2=0.
$
Each line bundle is stable, so
\eqref{eq:igusa-tangent-splitting} expresses $T_Z$
as a direct sum of stable bundles of the same slope.
Thus $T_Z$ is polystable for every polarization.
It is not stable, because each $L_i$ is a proper
subbundle of the same slope as $T_Z$.
Furthermore,
$
H^0(Z,\End T_Z)\simeq\mathbb C^3,
$
with the three factors acting by scalar multiplication
on the three summands. In particular, $T_Z$ is not simple.

Finally, the covering $p$ gives an exact sequence
\[
1\longrightarrow\pi_1(A)
\longrightarrow\pi_1(Z)
\longrightarrow G
\longrightarrow1.
\]
Since $\pi_1(A)\simeq\mathbb Z^6$, the fundamental
group of $Z$ contains $\mathbb Z^6$ as a subgroup
of index four and is therefore infinite.
\subsection{A finite-dimensional differential graded Lie model}

We study deformations of the holomorphic bundle $T_Z$ on the
fixed complex manifold $Z$.
Write $e_i=\partial/\partial z_i$ for the three constant
tangent directions on the covering torus $A$.
Let $E_{ij}$ be the matrix unit defined by
$
E_{ij}(e_j)=e_i,$ $
E_{ij}(e_\ell)=0\quad\text{if }\ell\ne j.$
Thus $E_{ij}E_{kl}=\delta_{jk}E_{il}$.
For matrix-valued differential forms, we combine matrix
multiplication with the exterior product:
\[
(\omega E_{ij})(\theta E_{kl})
=\delta_{jk}(\omega\wedge\theta)E_{il}.
\]

The Dolbeault differential graded Lie algebra associated
with $T_Z$ is
\[
\mathfrak g
=
\bigl(
A^{0,\bullet}(Z,\operatorname{End}T_Z),
\bar\partial,[\, ,\,]
\bigr).
\]
As in Section~\ref{sec:prelim}, a deformation
$\bar\partial_\alpha=\bar\partial+\alpha$ is integrable precisely when
\begin{equation}\label{eq:igusa-mc}
\bar\partial\alpha+\tfrac12[\alpha,\alpha]
=\bar\partial\alpha+\alpha^2=0,
\end{equation}
For the Dolbeault DGLA and its gauge action, see
\cite[Example~V.22 and Chapter~V]{Manetti2004}.

We now replace this infinite-dimensional DGLA by a
finite-dimensional subalgebra.
Equip $A$ with a product translation-invariant flat
K\"ahler metric and $T_A$ with the induced flat
Hermitian metric. These metrics are preserved by $G$
and descend to $Z$.
Since $p^*T_Z\simeq T_A$ is trivial, pullback identifies
$\mathfrak g$ with
$
(
A^{0,\bullet}(A)\otimes
\operatorname{Mat}_3(\mathbb C)
)^G.
$
Inside this complex, consider the invariant constant forms

\begin{equation}\label{eq:igusa-constant-dgla}
\mathfrak h^q
=
\left(
\bigwedge\nolimits^q
\langle\eta_1,\eta_2,\eta_3\rangle
\otimes\operatorname{Mat}_3(\mathbb C)
\right)^G.
\end{equation}
The Dolbeault differential vanishes on $\mathfrak h$.
Moreover, the product and bracket of invariant constant
forms are again invariant constant forms.
Thus $(\mathfrak h,0,[\, ,\,])$ is a finite-dimensional
DGLA subalgebra of $\mathfrak g$.

Harmonic forms on a flat complex torus have constant coefficients,
also for matrix-valued forms. Since the metric is $G$-invariant,
harmonic representatives of invariant cohomology classes are invariant.
The inclusion therefore induces isomorphisms
\[
\mathfrak h^q
\xrightarrow{\sim}
H^q(\mathfrak g)
\simeq H^q(Z,\operatorname{End}T_Z)
\]
for every $q$.
It follows that
\begin{equation}\label{eq:igusa-formality}
(\mathfrak h,0,[\, ,\,])
\hookrightarrow
(\mathfrak g,\bar\partial,[\, ,\,])
\end{equation}
is a quasi-isomorphism of DGLAs.
In particular, $\mathfrak g$ is formal: it is
quasi-isomorphic to its cohomology equipped with the
induced bracket and zero differential.

It remains to determine the invariant constant forms.
The action on endomorphisms is by conjugation, so
$E_{ij}$ has character
$
\chi_i\chi_j^{-1}=\chi_i\chi_j.
$
The diagonal matrices $E_{ii}$ have trivial character.
If $i\ne j$ and $\{i,j,k\}=\{1,2,3\}$, then $E_{ij}$
has character $\chi_k$.
More generally, the character of
$
\eta_{a_1}\wedge\cdots\wedge\eta_{a_q}E_{ij}
$
is
$
\chi_{a_1}\cdots\chi_{a_q}\chi_i\chi_j.
$
Such a form is invariant exactly when this product is
the trivial character.

In degree zero, only the diagonal matrices are invariant:
\begin{equation}\label{eq:igusa-h0}
\mathfrak h^0
=\langle E_{11},E_{22},E_{33}\rangle.
\end{equation}
In degree one, the character of $\eta_k$ must cancel
the character of $E_{ij}$.
This gives
\begin{equation}\label{eq:igusa-h1}
\begin{aligned}
\mathfrak h^1=\langle&
\eta_3E_{12},\eta_1E_{23},\eta_2E_{31},\eta_3E_{21},\eta_2E_{13},\eta_1E_{32}
\rangle.
\end{aligned}
\end{equation}

For degree two, the forms
$\eta_1\wedge\eta_2$,
$\eta_1\wedge\eta_3$, and
$\eta_2\wedge\eta_3$
have characters $\chi_3$, $\chi_2$, and $\chi_1$,
respectively. Therefore
\[
\begin{aligned}
\mathfrak h^2=\langle&
(\eta_1\wedge\eta_2)E_{12},
(\eta_1\wedge\eta_2)E_{21},(\eta_1\wedge\eta_3)E_{13},\\
&(\eta_1\wedge\eta_3)E_{31},(\eta_2\wedge\eta_3)E_{23},
(\eta_2\wedge\eta_3)E_{32}
\rangle.
\end{aligned}
\]
Finally, $\eta_1\wedge\eta_2\wedge\eta_3$ has trivial
character, so
\[
\mathfrak h^3
=
(\eta_1\wedge\eta_2\wedge\eta_3)
\langle E_{11},E_{22},E_{33}\rangle.
\]
The quasi-isomorphism
\eqref{eq:igusa-formality} now gives
\[
h^q(Z,\operatorname{End}T_Z)
=
\begin{cases}
3,&q=0,3,\\
6,&q=1,2.
\end{cases}
\]

The Maurer--Cartan equation in $\mathfrak h$ is $\alpha^2=0$.
The bracket is nonzero; for example,
\[
[\eta_3E_{12},\eta_1E_{23}]
=(\eta_3\wedge\eta_1)E_{13}\ne0.
\]

\subsection{The quadratic equations and analytic completeness}

Using the basis of \eqref{eq:igusa-h1}, write a general
element of $\mathfrak h^1$ as
\begin{equation}\label{eq:igusa-alpha}
\alpha=
\begin{pmatrix}
0&a\eta_3&e\eta_2\\
d\eta_3&0&b\eta_1\\
c\eta_2&f\eta_1&0
\end{pmatrix},
\end{equation}
where $a,b,c,d,e,f\in\mathbb C$.
Since $\alpha$ has constant coefficients,
$\bar\partial\alpha=0$, so its Maurer--Cartan equation
is simply $\alpha^2=0$.

The matrix product uses the exterior product of the
form coefficients:
\[
(\alpha^2)_{ij}
=\sum_k\alpha_{ik}\wedge\alpha_{kj}.
\]
For example,
$
(\alpha^2)_{13}
=(a\eta_3)\wedge(b\eta_1)
=-ab\,\eta_1\wedge\eta_3.
$
Computing all entries gives
\begin{align}\label{eq:igusa-square}
\alpha^2={}&
bc\,(\eta_1\wedge\eta_2)E_{21}
-ef\,(\eta_1\wedge\eta_2)E_{12}\notag\\
&+df\,(\eta_1\wedge\eta_3)E_{31}
-ab\,(\eta_1\wedge\eta_3)E_{13}\notag\\
&+ac\,(\eta_2\wedge\eta_3)E_{32}
-de\,(\eta_2\wedge\eta_3)E_{23}.
\end{align}
All diagonal entries vanish. For instance,
$
(\alpha^2)_{11}
=ad\,\eta_3\wedge\eta_3
+ec\,\eta_2\wedge\eta_2=0.
$
The six matrix-valued two-forms in
\eqref{eq:igusa-square} form a basis of $\mathfrak h^2$.
Their coefficients must therefore vanish separately,
and we obtain
\[
\alpha^2=0
\quad\Longleftrightarrow\quad
ab=bc=ca=de=ef=fd=0.
\]
Thus the constant solutions are parametrized by
\[
B:=V(ab,bc,ca,de,ef,fd)\subset\mathbb C^6,
\]
the space appearing in \eqref{eq:igusa-kuranishi}.

To identify $(B,0)$ with the full analytic semiuniversal base, we
use the Kuranishi fixed-point equation.

Let
$
\Delta_{\bar\partial}
=\bar\partial\bar\partial^*
+\bar\partial^*\bar\partial
$
be the Dolbeault Laplacian on endomorphism-valued forms.
Write $H$ for harmonic projection and $\mathcal G$
for the Green operator, normalized to vanish on
harmonic forms. Set
$
h=\bar\partial^*\mathcal G.
$
The Hodge decomposition gives
$
\operatorname{id}=H+\bar\partial h+h\bar\partial.
$
The analytic Kuranishi construction associates to
each sufficiently small harmonic parameter
$u\in\mathfrak h^1$ a form $\beta(u)$ satisfying
\begin{equation}\label{eq:igusa-kuranishi-fixedpoint}
\beta
=u-\tfrac12h[\beta,\beta].
\end{equation}
This solution satisfies the Kuranishi gauge
conditions
$
H\beta(u)=u,$ $
\bar\partial^*\beta(u)=0.
$
Its integrability is equivalent to the vanishing of
the Kuranishi obstruction map
\[
\kappa(u)
=\tfrac12H[\beta(u),\beta(u)].
\]
For the vector-bundle deformation theory and the
analytic DGLA formulation of the Kuranishi
construction, see
\cite[Appendix~A]{Huybrechts1995}
and \cite{GoldmanMillson1990}.

In the present example, harmonic forms have constant
coefficients, and their bracket again has constant
coefficients. Thus $[u,u]$ is harmonic, which implies
$
\mathcal G[u,u]=0,
$ $
h[u,u]=0.
$
It follows that $\beta=u$ solves
\eqref{eq:igusa-kuranishi-fixedpoint} for every
sufficiently small $u$.
By uniqueness, this is exactly the Kuranishi solution:
$
\beta(u)=u.
$
Consequently,
\[
\kappa(u)
=\tfrac12H[u,u]
=\tfrac12[u,u]
=u^2.
\]
Thus the obstruction map is exactly quadratic, and
$\bar\partial+u$ is integrable precisely when $u^2=0$.

The family of operators $\bar\partial+\alpha$ over
$(B,0)$ is therefore the analytic Kuranishi family.
The Kuranishi theorem makes this family semiuniversal:
every sufficiently small deformation is locally
obtained from it by base change, and its
Kodaira--Spencer map is an isomorphism.
With our chosen basis, that map is the identity
\[
T_0B\simeq\mathfrak h^1
\simeq H^1(Z,\End T_Z).
\]

The equations $ab=bc=ca=0$ define the three coordinate axes in
$\CC^3$, and the same holds for $de=ef=fd=0$. Hence
\[
B=V(ab,bc,ca)\times V(de,ef,fd).
\]
To check the analytic structure as well, work in the convergent
power-series ring $\CC\{a,b,c,d,e,f\}$. For
$x\in\{a,b,c\}$ and $y\in\{d,e,f\}$, let $\mathfrak p_{xy}$ be the
ideal generated by the four variables other than $x,y$. The monomial
identity
\[
(ab,bc,ca,de,ef,fd)
=\bigcap_{\substack{x\in\{a,b,c\}\\y\in\{d,e,f\}}}
\mathfrak p_{xy}
\]
expresses the defining ideal as an intersection of nine coordinate
prime ideals. Thus $B$ is reduced, and its nine irreducible components
are precisely the two-dimensional planes retaining one variable from
each triple.

On the other hand, all its defining equations are
quadratic, so their differentials vanish at the
origin. Therefore
$
T_0B\simeq\mathbb C^6.
$
Thus the semiuniversal base has nine
two-dimensional branches, while its Zariski tangent
space at the origin has dimension six.

\subsection{A single obstructed class and the moduli interpretation}

The nonsmoothness can also be seen without the full equations.
Consider the class represented by
\[
 \xi=\eta_3E_{12}+\eta_1E_{23}\in\mathfrak h^1.
\]
Since $E_{12}E_{23}=E_{13}$ and $E_{23}E_{12}=0$, its Yoneda square is
\begin{equation}\label{eq:igusa-explicit-obstruction}
 \xi^2=\eta_3\wedge\eta_1 E_{13}\ne0
 \quad\text{in }H^2(Z,\operatorname{End}T_{Z}).
\end{equation}
Nonvanishing follows either from the constant harmonic description
or by pulling back to $A$, where the indicated class is manifestly
nonzero.  The first-order operator
$\bar\partial+t\xi$ over $\mathbb C[t]/(t^2)$ could extend to
$\mathbb C[t]/(t^3)$ only if a correction $t^2\gamma$ satisfied
$\bar\partial\gamma=-\xi^2$.  Equation
\eqref{eq:igusa-explicit-obstruction} rules this out.

Finally, \eqref{eq:igusa-h0} gives
$
 \operatorname{Aut}(T_{Z})=(\mathbb C^*)^3.
$
An automorphism $\operatorname{diag}(\lambda_1,\lambda_2,\lambda_3)$
acts on the coefficient of $\eta_kE_{ij}$ by
$\lambda_i\lambda_j^{-1}$.  The diagonal scalar subgroup acts
trivially.  A coarse moduli space involves further identifications
by automorphisms (and, in a semistable moduli problem,
$S$-equivalence).  Accordingly, \eqref{eq:igusa-kuranishi} is a
semiuniversal deformation base, not an asserted local equation for
such a coarse moduli space.  Its singularity, and especially the
nonzero square \eqref{eq:igusa-explicit-obstruction}, arise before
any such quotient.  This completes the proof of
Theorem~\ref{thm:igusa-kuranishi}.

\begin{remark}\label{rem:aspinwall}
Aspinwall \cite[Section~7, equations~(51)--(57)]{Aspinwall2014} computes a
nonzero Yoneda product and a cubic potential $W=XYZ$ in the classical
local deformation problem associated with a crepant resolution of
$\CC^3/\frac17(1,2,4)$.  Its critical equations are
$XY=XZ=YZ=0$, the equations of three coordinate axes.  Thus this type of
quadratic obstruction already occurs in the literature.  The calculation
above realizes a product of two such germs as the complete semiuniversal
base of a tangent bundle on a compact threefold; it does not rely on a
globalization of the noncompact example.  The tangent bundle here remains
polystable rather than stable.
\end{remark}

\section{Fundamental groups and the scope of the examples}
\label{sec:infinite}

The rigid example has finite fundamental group, whereas the Igusa
quotient has infinite fundamental group. Under Convention~\ref{conv:cy},
this distinction is necessary: infinite fundamental group rules out
infinitesimal rigidity of the tangent bundle.

\begin{proposition}\label{prop:infinite}
If $X$ satisfies Convention~\ref{conv:cy} and $\pi_1(X)$ is
infinite, then
\[
H^1(X,\End T_X)\ne0.
\]
Equivalently, infinitesimal rigidity of $T_X$ forces
$\pi_1(X)$ to be finite.
\end{proposition}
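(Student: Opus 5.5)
The plan is to use the Beauville--Bogomolov decomposition to reduce to the case of a finite étale cover that contains an abelian factor, and then to produce an invariant class by descent. Since $K_X\simeq\OO_X$ and $X$ is compact Kähler, the decomposition theorem \cite{Beauville1983} gives a finite étale Galois cover $p:Y\to X$ with group $G$ and a splitting
\[
Y\simeq T\times\prod_j V_j\times\prod_k S_k .
\]
Here $T$ is a complex torus, the $V_j$ are simply connected strict Calabi--Yau manifolds, and the $S_k$ are irreducible holomorphic symplectic manifolds. Since $\dim X=3$, the only possibilities are $Y=T$ of dimension $3$, $Y=E\times S$ with $E$ an elliptic curve and $S$ a K3 surface, or $Y=V$ simply connected. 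If $\pi_1(X)$ is infinite, then $\pi_1(Y)$ is infinite, because it has finite index in $\pi_1(X)$. This excludes the third case, so $T\neq0$.

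Next I would choose $Y$ carefully so that $G$ preserves the product decomposition. Using the canonical (Bogomolov) cover, the torus factor is the Albanese-type factor: the decomposition $T_Y=T_T\oplus T_{\mathrm{rest}}$ is the splitting of the holonomy representation of a Ricci-flat metric into its flat part and its non-flat part. Hence it is canonical and preserved by the deck group. I will write $T_Y=\mathcal F\oplus\mathcal N$, where $\mathcal F=\OO_Y\otimes\mathfrak t$ is the trivial bundle spanned by the flat directions ($\mathfrak t=H^0(Y,T_Y)$ restricted to the torus directions). Then $G$ acts linearly on $\mathfrak t$ through a finite group $\rho$.

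The key step is to write down an invariant nonzero class in $H^1(Y,\End T_Y)$ and descend it via Lemma~\ref{lem:descent}. The candidate is the ``identity'' class
\[
\theta\in H^1(Y,\OO_Y)\otimes\End(\mathfrak t)\simeq \overline{\mathfrak t}^{*}\otimes\mathfrak t^{*}\otimes\mathfrak t ,
\]
which sits inside $H^1(Y,\End\mathcal F)\subset H^1(Y,\End T_Y)$ via $H^1(T,\OO_T)\simeq\overline{\mathfrak t^*}$. Using the $G$-invariant flat hermitian metric on $\mathfrak t$, this identification is $G$-equivariant. The required invariant vector is then
\[
\Theta=\sum_{i,j}\bar\eta_i\otimes(e_i^*\otimes e_j)\ \text{contracted appropriately},
\]
or more simply $\sum_i d\bar z_i\otimes(\text{projection onto }e_i)$ for a unitary basis. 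Concretely, the invariant tensor in $\overline{\mathfrak t}^*\otimes\mathfrak t^*\otimes\mathfrak t$ is $h\otimes\mathrm{id}$ rearranged, where $h:\overline{\mathfrak t}\to\mathfrak t^*$ is the hermitian metric. A simpler choice is the element of $\overline{\mathfrak t}^*\otimes\mathfrak t$ given by $h^{-1}$, viewed in $H^1(Y,T_Y)\cdot$ composed with $\mathrm{id}$. Hence I would take the class $u=\sum_i d\bar z_i\otimes e_i$, which is $G$-invariant because $G$ acts unitarily, and then form $\sum_i d\bar z_i\otimes(e_i\otimes e_i^*)$ in $H^1(\End\mathcal F)$. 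This is not invariant in general, so the better choice is the contraction $\sum_{i,k} d\bar z_i\otimes e_k\otimes(e_i^{*}\text{ via } h)$. I expect verifying exact invariance and nonvanishing in cohomology to be routine once the equivariant identification is fixed. Nonvanishing follows because constant $(0,1)$-forms with matrix coefficients are harmonic on the torus factor (Künneth with $H^0(\text{rest},\OO)=\CC$).

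The main obstacle is to ensure that the chosen class is genuinely $G$-invariant. The worry is that the flat part is preserved but the action includes translations and twisting of the non-flat factor. Translations act trivially on constant forms, so only the linear part $\rho$ matters, and the tensor built from $h$ is $U(\mathfrak t)$-invariant. Descent by Lemma~\ref{lem:descent} then gives $H^1(X,\End T_X)\supset\CC\cdot p_*\Theta\ne0$.
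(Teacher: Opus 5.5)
Your reduction to a Galois cover by an abelian threefold or by $S\times E$ is fine, and so is the descent step. The key step, producing a nonzero $G$-invariant class, fails in both cases.

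\textbf{The abelian case.} The space you work in, $\overline{\mathfrak t}^{*}\otimes\mathfrak t^{*}\otimes\mathfrak t$, is isomorphic as a unitary representation to $\mathfrak t\otimes\mathfrak t^{*}\otimes\mathfrak t$. On it a scalar $\lambda\cdot\mathrm{id}$ with $|\lambda|=1$ acts by multiplication by $\lambda$. So no tensor built from $h$ is $U(\mathfrak t)$-invariant, or even $SU(\mathfrak t)$-invariant, since $e^{2\pi i/3}\,\mathrm{id}\in SU(3)$ acts by $e^{2\pi i/3}$. For the same reason $u=\sum_i d\bar z_i\otimes e_i$ is not invariant: scalars act on it by $\lambda^2$. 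The unitarily invariant tensor is the K\"ahler form $\sum_i dz_i\wedge d\bar z_i$, which lies in $H^1(\Omega^1)$, not in $H^1(T)$ or $H^1(\End T)$.

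Invariants exist only because of the particular linear parts $L_g$. Your argument never uses the two facts that produce them.
\begin{enumerate}
\item[(a)] $\det L_g=1$, which follows from the $G$-invariant holomorphic volume form.
\item[(b)] Every $L_g$ has eigenvalue $1$, which follows from freeness: otherwise $1-L_g$ is an isogeny, and $(1-L_g)a=t_g$ gives a fixed point.
\end{enumerate}
Together they give eigenvalues $1,\lambda,\lambda^{-1}$, hence a real character and $V\simeq V^*$. Then $H^1(A,\End T_A)\simeq V^{\otimes 3}\supset\Lambda^3V$, and $\Lambda^3V$ is the trivial representation. This is the paper's argument. Without (b) the conclusion fails, for instance for a group containing $e^{2\pi i/3}\,\mathrm{id}$. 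So an argument that ignores freeness cannot be correct.

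\textbf{The case $Y=S\times E$.} Here your approach cannot work at all. The flat part $\mathcal F=\mathrm{pr}_E^*T_E$ is a line bundle, so $H^1(Y,\End\mathcal F)=H^1(Y,\OO_Y)\cdot\mathrm{id}$. Its $G$-invariants are $H^1(X,\OO_X)=0$ by Lemma~\ref{lem:descent} and Convention~\ref{conv:cy}. This case really occurs, for example $(S\times E)/\langle(\iota,-1_E)\rangle$ with $\iota$ an Enriques involution.

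You need the off-diagonal summand $\mathcal Hom(\mathrm{pr}_E^*T_E,\mathrm{pr}_S^*T_S)=T_S\boxtimes\Omega^1_E$. The paper applies the fixed-point-free holomorphic Lefschetz formula on the K3 surface. This shows that $G$ acts on $H^0(S,K_S)$ and on $H^0(E,\Omega^1_E)$ through the same sign character $\varepsilon$. Let $\beta\in H^1(S,T_S)$ be obtained from an averaged K\"ahler class by contraction with $\sigma$. Then $\beta$ transforms by $\varepsilon$, and $\beta\otimes\theta$ is a nonzero invariant class in $H^1(S,T_S)\otimes H^0(E,\Omega^1_E)\subset H^1(S\times E,\End T_{S\times E})$.
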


\begin{proof}
By the Beauville--Bogomolov decomposition, an infinite fundamental
group implies that $X$ has a finite \'etale cover by either an
abelian threefold $A$ or a product $S\times E$, where $S$ is a
K3 surface and $E$ is an elliptic curve;
see \cite{Beauville1983,OguisoSakurai2001}.

We may take this cover to be Galois. Indeed, passing to the Galois
closure gives a connected finite \'etale cover of the original
covering space. Such a cover of an abelian threefold is again an
abelian threefold. Such a cover of $S\times E$ is of the form
$S\times E'$ for an elliptic curve $E'$, because $S$ is simply
connected.

Write the resulting cover as
$
p:\widetilde X\to X=\widetilde X/G,
$
where $G$ acts freely. Lemma~\ref{lem:descent} gives
$
H^1(X,\End T_X)
\simeq H^1(\widetilde X,\End T_{\widetilde X})^G.
$
It therefore suffices to construct a nonzero $G$-invariant class
on the covering space.

We use compatible left actions throughout: pushforward on vector
fields and inverse pullback on differential forms and their
cohomology.

\medskip
\noindent\emph{Case 1: $\widetilde X=A$ is an abelian threefold.}
Set
$
V=H^0(A,T_A).
$
The tangent bundle is equivariantly trivial:
$
T_A\simeq\mathcal O_A\otimes V.
$
Every $g\in G$ acts affinely,
$
g(a)=L_g a+t_g,
$
and its action on $V$ is given by the linear part $L_g$.

For $g\ne1$, the matrix $L_g$ must have eigenvalue $1$.
Otherwise, $1-L_g:A\to A$ would be
surjective. The equation
$
(1-L_g)a=t_g
$
would then have a solution, giving a fixed point of $g$.
This contradicts the freeness of the action.
The identity element also has eigenvalue $1$.

A nowhere vanishing holomorphic three-form on $X$ pulls back to
a $G$-invariant three-form on $A$, so
$
\det L_g=1.
$
Since $G$ is finite, the eigenvalues of $L_g$ are roots of unity.
They therefore have the form
$
1,\lambda,\lambda^{-1}.
$
In particular,
$
\operatorname{tr}(g\mid V)
=1+\lambda+\lambda^{-1}\in\mathbb R.
$
Thus $V$ and $V^*$ have the same character, and hence
$
V\simeq V^*
$
as $G$-representations.

With our action convention,
$
H^1(A,\mathcal O_A)\simeq\overline V^{\,*}.
$
Choosing a $G$-invariant Hermitian inner product identifies
$\overline V^{\,*}$ with $V$. Consequently,
\[
\begin{aligned}
H^1(A,\End T_A)
&\simeq H^1(A,\mathcal O_A)\otimes V\otimes V^*\simeq V^{\otimes3}.
\end{aligned}
\]
Antisymmetrization gives a $G$-equivariant inclusion
$
\Lambda^3V\hookrightarrow V^{\otimes3}.
$
Since $\det L_g=1$ for every $g$, the one-dimensional
representation $\Lambda^3V$ is trivial. Hence
$
H^1(A,\End T_A)^G\ne0,
$
which proves the claim in this case.

\medskip
\noindent\emph{Case 2: $\widetilde X=S\times E$.}
Every deck transformation splits as
\[
g=(f_g,h_g),
\qquad
f_g\in\operatorname{Aut}(S),\quad
h_g\in\operatorname{Aut}(E).
\]
Indeed, every holomorphic map $S\to E$ is constant, so the
$E$-component depends only on the $E$-coordinate.
The resulting family of automorphisms of $S$ is constant because
$\operatorname{Aut}(S)$ is discrete and $E$ is connected.

Choose nonzero forms
$
\sigma\in H^0(S,K_S),
$ $
\theta\in H^0(E,\Omega_E^1),
$
and write
$
g\cdot\sigma=a(g)\sigma,
$ $
g\cdot\theta=b(g)\theta.
$
The product three-form is $G$-invariant, so
$
a(g)b(g)=1.
$

We claim that both characters take the same values in
$\{1,-1\}$.

If $b(g)=1$, then $a(g)=1$.
If $b(g)\ne1$, the affine automorphism $h_g$ has nonidentity
linear part and therefore has a fixed point on $E$.
Since $g$ acts freely on $S\times E$, the automorphism $f_g$
must then have no fixed point on $S$.
For a K3 surface,
$
H^0(S,\mathcal O_S)=\mathbb C,$ $
H^1(S,\mathcal O_S)=0,$ $
H^2(S,\mathcal O_S)\simeq H^0(S,K_S)^*.
$
The fixed-point-free holomorphic Lefschetz formula therefore gives
$
0=1+a(g)^{-1}.
$
Thus $a(g)=b(g)=-1$.

It follows that there is a common sign character
$
\varepsilon:G\to\{1,-1\}
$
such that
$
g\cdot\sigma=\varepsilon(g)\sigma,
$ $
g\cdot\theta=\varepsilon(g)\theta.
$
Average a K\"ahler form on $S$ over $G$. Its cohomology class gives
a nonzero invariant element
$
\omega\in H^1(S,\Omega_S^1)^G.
$
Contraction with the nowhere degenerate two-form $\sigma$ gives
an isomorphism
$
C_\sigma:T_S\xrightarrow{\sim}\Omega_S^1,
$ $
v\longmapsto\iota_v\sigma.
$
Let
$
\beta=C_\sigma^{-1}(\omega)\in H^1(S,T_S).
$
Then $\beta\ne0$. Naturality of contraction gives
$
g\cdot(C_\sigma\beta)
=C_{g\cdot\sigma}(g\cdot\beta)
=\varepsilon(g)C_\sigma(g\cdot\beta).
$
Since $\omega$ is invariant and $\varepsilon(g)^2=1$, this implies
$
g\cdot\beta=\varepsilon(g)\beta.
$
Thus $\beta$ and $\theta$ transform by the same sign, and
$
\beta\otimes\theta
$
is a nonzero invariant tensor.

To place this tensor in the required cohomology group, observe that
$
T_{S\times E}
=\operatorname{pr}_S^*T_S\oplus\operatorname{pr}_E^*T_E.
$
Hence $\End T_{S\times E}$ has the $G$-invariant direct summand
$
\mathcal Hom\bigl(
\operatorname{pr}_E^*T_E,\operatorname{pr}_S^*T_S
\bigr)
=T_S\boxtimes\Omega_E^1.
$
The K\"unneth formula, together with $H^0(S,T_S)=0$, gives
$
H^1(S\times E,T_S\boxtimes\Omega_E^1)
\simeq H^1(S,T_S)\otimes H^0(E,\Omega_E^1).
$
Therefore $\beta\otimes\theta$ determines a nonzero class in
$
H^1(S\times E,\End T_{S\times E})^G.
$
Lemma~\ref{lem:descent} now proves
$H^1(X,\End T_X)\ne0$.
\end{proof}

The finite fundamental group need not be trivial, as
Theorem~\ref{thm:rigid} shows. That theorem gives a stable rigid tangent
bundle on a nonsimply connected threefold; its simply connected cover
has a nonrigid tangent bundle. The Igusa example has an obstructed
polystable tangent bundle, but it is a sum of flat line bundles.
Neither construction addresses the corresponding question for a
simply connected threefold with stable tangent bundle.

\bibliographystyle{amsalpha}
\bibliography{references}
\end{document}